\newtheorem{thm}{Theorem}
\newtheorem{cor}[thm]{Corollary}
\newtheorem{lem}[thm]{Lemma}
\newtheorem{prop}[thm]{Proposition}
\newtheorem{defn}[thm]{Definition}
\newtheorem{ex}[thm]{Example}
\newtheorem{rem}[thm]{Remark}
\newtheorem{nota}[thm]{Notation}
\numberwithin{thm}{section}
\newcommand{\N}{\mathbb{N}}
\newcommand{\Z}{\mathbb{Z}}
\newcommand{\R}{\mathbb{R}}
\newcommand{\C}{\mathbb{C}}
\newcommand{\Hc}{\mathcal{H}}
\newcommand{\A}{\mathcal{A}}
\newcommand{\mB}{\mathcal{B}}
\DeclareMathOperator{\Tr}{Tr}
\DeclareMathOperator{\dom}{dom}
\newcommand{\op}{\operatorname{op}}
\newcommand{\sym}{\operatorname{sym}}
\newcommand{\syml}{\operatorname{sym}_{L_2}}
\def\multinom#1#2{\ensuremath{\left(\kern-.3em\left(\genfrac{}{}{0pt}{}{#1}{#2}\right)\kern-.3em\right)}}
\newcommand*{\mint}[1]{%
  \mint@l{#1}{}%
}
\newcommand*{\mint@l}[2]{%
  \@ifnextchar\limits{%
    \mint@l{#1}%
  }{%
    \@ifnextchar\nolimits{%
      \mint@l{#1}%
    }{%
      \@ifnextchar\displaylimits{%
        \mint@l{#1}%
      }{%
        \mint@s{#2}{#1}%
      }%
    }%
  }%
}
\newcommand*{\mint@s}[2]{%
  \@ifnextchar_{%
    \mint@sub{#1}{#2}%
  }{%
    \@ifnextchar^{%
      \mint@sup{#1}{#2}%
    }{%
      \mint@{#1}{#2}{}{}%
    }%
  }%
}
\def\mint@sub#1#2_#3{%
  \@ifnextchar^{%
    \mint@sub@sup{#1}{#2}{#3}%
  }{%
    \mint@{#1}{#2}{#3}{}%
  }%
}
\def\mint@sup#1#2^#3{%
  \@ifnextchar_{%
    \mint@sup@sub{#1}{#2}{#3}%
  }{%
    \mint@{#1}{#2}{}{#3}%
  }%
}
\def\mint@sub@sup#1#2#3^#4{%
  \mint@{#1}{#2}{#3}{#4}%
}
\def\mint@sup@sub#1#2#3_#4{%
  \mint@{#1}{#2}{#4}{#3}%
}
\newcommand*{\mint@}[4]{%
  \mathop{}%
  \mkern-\thinmuskip
  \mathchoice{%
    \mint@@{#1}{#2}{#3}{#4}%
        \displaystyle\textstyle\scriptstyle
  }{%
    \mint@@{#1}{#2}{#3}{#4}%
        \textstyle\scriptstyle\scriptstyle
  }{%
    \mint@@{#1}{#2}{#3}{#4}%
        \scriptstyle\scriptscriptstyle\scriptscriptstyle
  }{%
    \mint@@{#1}{#2}{#3}{#4}%
        \scriptscriptstyle\scriptscriptstyle\scriptscriptstyle
  }%
  \mkern-\thinmuskip
  \int#1%
  \ifx\\#3\\\else_{#3}\fi
  \ifx\\#4\\\else^{#4}\fi  
}
\newcommand*{\mint@@}[7]{%
  \begingroup
    \sbox0{$#5\int\m@th$}%
    \sbox2{$#5\int_{}\m@th$}%
    \dimen2=\wd0 %
    \let\mint@limits=#1\relax
    \ifx\mint@limits\relax
      \sbox4{$#5\int_{\kern1sp}^{\kern1sp}\m@th$}%
      \ifdim\wd4>\wd2 %
        \let\mint@limits=\nolimits
      \else
        \let\mint@limits=\limits
      \fi
    \fi
    \ifx\mint@limits\displaylimits
      \ifx#5\displaystyle
        \let\mint@limits=\limits
      \fi
    \fi
    \ifx\mint@limits\limits
      \sbox0{$#7#3\m@th$}%
      \sbox2{$#7#4\m@th$}%
      \ifdim\wd0>\dimen2 %
        \dimen2=\wd0 %
      \fi
      \ifdim\wd2>\dimen2 %
        \dimen2=\wd2 %
      \fi
    \fi
    \rlap{%
      $#5%
        \vcenter{%
          \hbox to\dimen2{%
            \hss
            $#6{#2}\m@th$%
            \hss
          }%
        }%
      $%
    }%
  \endgroup
}
\title{
\Large 
A noncommutative integral on spectrally truncated spectral triples, and a link with quantum ergodicity}
\date{\today}
\author{Eva-Maria Hekkelman and Edward A. McDonald}
\begin{document}

\maketitle{}
\begin{abstract}
    We propose a simple approximation of the noncommutative integral in noncommutative geometry for the Connes--Van Suijlekom paradigm of spectrally truncated spectral triples. A close connection between this approximation and the field of quantum ergodicity and work by Widom in particular immediately provides a Szeg\H{o} limit formula for noncommutative geometry. We then make a connection to the density of states. Finally, we propose a definition for the ergodicity of geodesic flow for compact spectral triples. This definition is known in quantum ergodicity as uniqueness of the vacuum state for $C^*$-dynamical systems, and for spectral triples where local Weyl laws hold this implies that the Dirac operator of the spectral triple is quantum ergodic. This brings to light a close connection between quantum ergodicity and Connes' integral formula.
\end{abstract}
Noncommutative geometry (NCG)~\cite{Connes1994} aims to study geometry through spectral data, motivated in part by the result that a Riemannian manifold can be reconstructed by such means~\cite{Connes2013}. The relevant spectral data can be studied in the form of a \textit{spectral triple}. For applications of NCG in physics and numerical computations in NCG, it is important to know how well spectral triples can be approximated by a finite truncation, since this is all we can measure physically or compute numerically. Connes and Van Suijlekom introduced the concept of operator system spectral triples for this purpose~\cite{ConnesvSuijlekom2021}, developments towards which were made in~\cite{DAndreaLizzi2014, GlaserStern2020, GlaserStern2021, ConnesvSuijlekom2022, DAndreaLandi2022, Hekkelman2022, GielenvSuijlekom2023, Rieffel2023, LeimbachvSuijlekom2024, Suijlekom2024a, Suijlekom2024b} amongst others.

We will connect this paradigm with Connes' noncommutative integral. On a Hilbert space $\Hc$ with Dirac operator $D$, 
the (normalised) positive functional
\begin{equation}\label{eq:NoncomInt}
a \mapsto \frac{\Tr_\omega(a\langle D\rangle ^{-d})}{\Tr_\omega(\langle D\rangle^{-d})}, \quad a \in B(\Hc),
\end{equation}
where $\langle x \rangle := (1+|x|^2)^{\frac{1}{2}}$, $\omega \in \ell_\infty^*$ is an extended limit, and $\Tr_\omega$ is the corresponding Dixmier trace (see Section~\ref{S:Preliminaries}), has been identified by Connes as the correct analogue in NCG of integration on compact Riemannian manifolds~\cite{Connes1994} and therefore has been dubbed the noncommutative integral. In this note we will show that given a finite-rank spectral projection $P_\lambda : = \chi_{[-\lambda,\lambda]}(D)$ where $\chi_{[-\lambda,\lambda]}$ is the indicator function of the interval $[-\lambda, \lambda]\subseteq \R$, the functional
\begin{equation}\label{eq:TruncInt}
    P_\lambda a P_\lambda \mapsto \frac{\Tr(P_\lambda a P_\lambda)}{\Tr(P_\lambda)}, \quad a \in B(\Hc),
\end{equation}
approximates the noncommutative integral~\eqref{eq:NoncomInt} on spectrally truncated unital spectral triples (Proposition~\ref{P:HeattoInt}, Theorem~\ref{T:Log-CesaroMean}). This is a result in the spirit of~\cite{Stern2019}, where finite-rank approximations of zeta residues are given. We however do not assume the existence of a full asymptotic expansion of the heat trace. Instead, we focus our efforts on the computation of the first term of this expansion, which is the noncommutative integral.

The language involved is closely tied to the field of quantum ergodicity, the inception of which can largely be credited to Shnirelman, Zelditch and Colin de Verdi\`ere~\cite{Shnirelman1974,ColindeVerdiere1985,Zelditch1987}. For reviews of this field, we refer to~\cite{Zelditch2010, Zelditch2017}. Quantum ergodicity is a property of an operator which can mean various things. A common definition is that, given a compact Riemannian manifold $M$ and a positive self-adjoint operator $\Delta$ on $L_2(M)$ with compact resolvent, the operator $\Delta$ is said to be quantum ergodic if for every orthonormal basis $\{e_n\}_{n=0}^\infty$ of $L_2(M)$ consisting of eigenfunctions of $\Delta$ with non-decreasing corresponding eigenvalues, there exists a density one subsequence $J\subseteq \N$ such that for all zero-order classical pseudodifferential operators $\mathrm{Op}(\sigma)$ with principal symbol $\sigma \in C^\infty(S^*M)$,
\[
\lim_{J \ni j \to \infty}\langle e_{j}, \mathrm{Op}(\sigma) e_{j}\rangle_{L_2(M)} = \int_{S^*M} \sigma \, d\nu,
\]
where $\nu$ is the measure on the cotangent sphere $S^*M$ induced by the Riemannian metric. In this context, a density one subsequence means that 
\[
\frac{\# J \cap \{0,\ldots, n\}}{n+1} \to 1, \quad n \to \infty.
\]
Quantum ergodicity implies in particular that the eigenfunctions $|e_j|^2$ become uniformly distributed over $M$ as $J\ni j \to \infty$, in the sense that the measures $|e_j|^2 d\nu_g$ converge to $\frac{1}{\mathrm{vol}(M)}d\nu_g$ in the weak$^*$-topology.

Although quantum ergodicity shares a philosophical link with NCG -- emerging from a functional-analytic approach to ergodic geodesic flow on compact Riemannian manifolds -- there has yet to be made an explicit connection between the two fields, despite their contemporary development. We will show in Section~\ref{S:Ergodicity} that our results on the noncommutative integral on truncated spectral triples provide the means with which the gap can be bridged.

We propose below a straightforward noncommutative generalisation of the property of ergodic geodesic flow on compact Riemannian manifolds for spectral triples, and explore what some results from the field of quantum ergodicity provide in this context. Our definition of ergodicity is known in the study of $C^*$-dynamical systems as uniqueness of the vacuum state, and hence a result by Zelditch~\cite{Zelditch1996} can now be recognised as an NCG version of the classical result that ergodicity of the geodesic flow implies quantum ergodicity of the Laplace--Beltrami operator~\cite{Shnirelman1974,ColindeVerdiere1985,Zelditch1987}, see Theorem~\ref{T:MainQE} below.

Additionally, we will draw from a result of Widom~\cite{Widom1979} on the asymptotic behaviour of the functional~\eqref{eq:TruncInt}, which directly implies a Szeg\H{o} limit formula for spectral triples that satisfy the Weyl law (Theorem~\ref{T:Szego}). This provides that for all self-adjoint $A \in B(\Hc)$ which map $\dom|D|$ into itself and such that $[D,A]$ is bounded,
\[
\Tr_\omega (\langle D \rangle^{-d}) \cdot \omega \circ M \bigg(  \frac{\Tr(f(P_{\lambda_n} A P_{\lambda_n}))}{\Tr(P_{\lambda_n})}\bigg) = \Tr_\omega (f(A)\langle D\rangle^{-d}), \quad f \in C(\R),\,f(0)=0.
\]
Here, $M: \ell_\infty \to \ell_\infty$ is a logarithmic averaging operator, and $\omega \in \ell_\infty^*$ is an extended limit. 
Details are provided in Section~\ref{S:Szego}. Note that we use the short-hand notation $\omega \circ M (a_n)$ for $ \omega \circ M (\{a_n\}_{n=1}^\infty)$. We remark that this result provides the insight that Szeg\H{o} limit theorems can be interpreted as versions of Connes' integral formula.

An outline of this paper is as follows. We start with some preliminaries in Section~\ref{S:Preliminaries}, and we then explore and make precise the relation between the functionals~\eqref{eq:NoncomInt} and~\eqref{eq:TruncInt} in Section~\ref{S:Integration}.
Section~\ref{S:Szego} provides the mentioned Szeg\H{o} limit theorem for NCG. Next, we discuss a way of interpreting the functional~\eqref{eq:TruncInt} when the noncommutative integral~\eqref{eq:NoncomInt} is not defined, for example in $\theta$-summable or $\mathrm{Li}_1$-summable spectral triples. Namely, we relate the functional~\eqref{eq:TruncInt} to a functional that is sometimes called the Fr\"ohlich functional, which has been studied extensively in~\cite{GoffengRennie2019} as a KMS state. Finally, in Section~\ref{S:Ergodicity} we exhibit our study in quantum ergodicity and its relation to NCG through our results on the noncommutative integral.

\textbf{Acknowledgement:} The authors thank Nigel Higson for helpful comments and suggestions, and we thank Magnus Goffeng for pointing out the condition $[D,A]$ being bounded is sufficient for Lemma~\ref{L:Widom}. We are furthermore indebted to the anonymous reviewers of this note, who provided significant insights and comments. The first named author would like to thank the Pennsylvania State University and Nigel Higson for their hospitality during a visit where part of this work was done, and is partially supported by the Australian Research Council Laureate Fellowship FL170100052. We also extend our thanks to Eric Leichtnam, Qiaochu Ma, and Rapha\"el Ponge for their assistance.

\section{Preliminaries}\label{S:Preliminaries}
A spectral triple is a construction that is modeled after the data needed to reconstruct compact Riemannian spin manifolds~\cite{Connes1994, Connes2013}. The origin of the definition can be traced to Baaj--Julg \cite{BaajJulg1983}. An operator system spectral triple is a generalisation of this, introduced in~\cite{ConnesvSuijlekom2021}.
\begin{defn}
    An operator system spectral triple $(\A, \Hc, D)$ consists of a space $\A$ of bounded operators on a Hilbert space $\Hc$ such that its norm closure is $*$-invariant, $D$ is a self-adjoint operator on $\Hc$ with compact resolvent, and for all $T \in \A$ we have that $T(\dom D) \subseteq \dom D$ and $[D,T]$ extends to a bounded operator. 
    If $\A$ forms a $*$-algebra, $(\A, \Hc, D)$ is simply called a spectral triple.
\end{defn}

Typical examples of operator system spectral triples are of the form
\[
(P\A P, P\Hc, PD),
\]
where $(\A, \Hc, D)$ is a spectral triple and $P = \chi_I(D)$ is a spectral projection of $D$. 

It should be remarked that in this work, we will mainly concentrate on
high energy (large eigenvalue) asymptotics corresponding to $D$, using spectral projections of the form
\[
P_\lambda:=\chi_{[-\lambda,\lambda]}(D) = \chi_{[0,\lambda]}(|D|).
\]
As such, our results really only depend on the positive operator $|D|$. We will keep the operator $D$ itself around to maintain notation consistent with the noncommutative geometry literature.

Write $K(\Hc)\subseteq B(\Hc)$ for the compact operators on $\Hc$, and for a compact operator $A$ write $\{\lambda(k,A) \}_{k=0}^\infty$ for any sequence of eigenvalues of $A$, counting multiplicities, ordered in decreasing modulus. For integration formulas in noncommutative geometry, an essential role is played by the weak trace-class operators (sometimes dubbed `infinitesimals of order $1$')
\[
\mathcal{L}_{1, \infty} := \{A \in K(\Hc) : \lambda(k,|A|) = O(k^{-1}),\; k\to\infty \},
\]
and the rich structure of traces on this two-sided ideal in $B(\Hc)$~\cite{LSZVol1, LMSZVol2}. 

\begin{defn}
    A singular trace on a two-sided ideal $J\subseteq B(\Hc)$ is a unitarily invariant linear functional $\phi: J \to \mathbb{C}$ that vanishes on finite-rank operators. 

    An extended limit $\omega$ is a state on $\ell_\infty$, the space of bounded sequences, which vanishes on the set of sequences converging to zero. Typically we write $\omega(a_n)$ to mean $\omega(\{a_n\}_{n=0}^\infty)$. For any extended limit $\omega \in \ell_\infty^*$, the mapping $\Tr_\omega : \mathcal{L}_{1,\infty} \to \mathbb{C}$ defined by
    \[
    \Tr_\omega(A) := \omega \bigg(\frac{1}{\log(n+2)} \sum_{k=0}^n \lambda(k,A) \bigg), \quad A \in \mathcal{L}_{1,\infty},
    \]
    is a singular trace on $\mathcal{L}_{1,\infty}$, called a Dixmier trace (see e.g.~\cite[Theorem~6.1.2]{LSZVol1}).
\end{defn}
It is important to remark that not all singular traces on $\mathcal{L}_{1,\infty}$ are Dixmier traces~\cite[Chapter~6]{LSZVol1}.
As a warning to the reader, in the literature sometimes Dixmier traces are considered on the Dixmier--Macaev ideal $\mathcal{M}_{1,\infty} := \{A \in K(\Hc) : \sum_{k=0}^n \lambda(k,|A|) = O(\log(n))\}$, which is sometimes also denoted by $\mathcal{L}_{1,\infty}$, though we have $\mathcal{L}_{1,\infty} \subsetneq \mathcal{M}_{1,\infty}.$

For spectral triples, the term `noncommutative integral' is inspired by the following result, a consequence of Connes' trace formula~\cite{Connes1988}. For details, see~\cite[Chapters~7,8]{LSZVol1} and~\cite[Chapters~2,3]{LMSZVol2}, as well as~\cite{ZaninSukochev2023}.

\begin{thm}[Connes' integration formula]\label{T:ConnesIntegration}
    Let $(M,g)$ be a $d$-dimensional closed Riemannian manifold ($d>1$) with volume form $\nu_g$. For $f \in C(M)$ and for every positive normalised trace $\phi$ on $\mathcal{L}_{1,\infty}(L_2(M))$ we have
    \[
    \phi(M_f (1-\Delta_g)^{-\frac{d}{2}}) = \phi( (1-\Delta_g)^{-\frac{d}{2}}) \int_{M} f \, d\nu_g = \frac{\operatorname{Vol}(\mathbb{S}^{d-1})}{d(2\pi)^d} \int_{M} f \, d\nu_g.
    \]
\end{thm}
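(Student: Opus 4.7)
The plan is to reduce the identity to Connes' trace theorem in its strong form that extends from Dixmier traces to \emph{all} positive normalised traces on $\mathcal{L}_{1,\infty}$, as developed in~\cite[Chapters 7--8]{LSZVol1} and~\cite[Chapters 2--3]{LMSZVol2}. That extended form states: for a classical pseudodifferential operator $P$ of order $-d$ on a $d$-dimensional closed Riemannian manifold, every positive normalised trace $\phi$ on $\mathcal{L}_{1,\infty}$ evaluates to
\[
\phi(P) = \frac{1}{d(2\pi)^d} \int_{S^*M} \sigma_{-d}(P)(x,\xi)\, dS(x,\xi),
\]
where $\sigma_{-d}$ is the principal symbol and $S^*M$ is the cosphere bundle.

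First I would verify that $M_f(1-\Delta_g)^{-d/2}$ lies in $\mathcal{L}_{1,\infty}$: by the Weyl law for $-\Delta_g$ we have $\lambda(k,(1-\Delta_g)^{-d/2}) = O(k^{-1})$, and multiplication by the bounded operator $M_f$ preserves the ideal. Next, when $f$ is smooth, $M_f(1-\Delta_g)^{-d/2}$ is a classical pseudodifferential operator of order $-d$ whose principal symbol is $(x,\xi) \mapsto f(x)\|\xi\|_g^{-d}$. Plugging this into the display above and computing the cosphere-fiber integral $\int_{\|\xi\|_g = 1} \|\xi\|_g^{-d}\, dS(\xi) = \operatorname{Vol}(\mathbb{S}^{d-1})$, Fubini yields
\[
\phi\bigl(M_f(1-\Delta_g)^{-d/2}\bigr) = \frac{\operatorname{Vol}(\mathbb{S}^{d-1})}{d(2\pi)^d} \int_M f\, d\nu_g.
\]
Specialising $f \equiv 1$ then determines $\phi((1-\Delta_g)^{-d/2})$ and gives the middle equality. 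For merely continuous $f$ I would approximate uniformly by smooth $f_n$ and pass to the limit on both sides using the estimate $|\phi(M_{f - f_n}(1-\Delta_g)^{-d/2})| \lesssim \|f - f_n\|_\infty$, which is valid because $\phi$ is bounded on $\mathcal{L}_{1,\infty}$ and $\|M_{f-f_n} A\|_{1,\infty} \leq \|f - f_n\|_\infty \|A\|_{1,\infty}$.

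The main obstacle is the universal-measurability input: Connes' original 1988 statement applies to Dixmier traces only, but $\mathcal{L}_{1,\infty}$ admits many positive normalised traces that are not Dixmier (as noted in the preamble). Showing that classical symbols of critical order produce the same value under \emph{every} such trace uses the polynomial part of the symbol expansion together with structure theory of traces on $\mathcal{L}_{1,\infty}$. I would invoke this from the cited monographs rather than reprove it; once it is in hand, the remaining symbol calculus is routine.
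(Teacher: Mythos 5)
The paper gives no proof of this theorem at all --- it is quoted as a known consequence of Connes' trace formula with pointers to the Lord--Sukochev--Zanin monographs and to Zanin--Sukochev for the extension to arbitrary positive normalised traces --- and your proposal follows exactly the route those references take: the strong form of Connes' trace theorem applied to the order $-d$ classical pseudodifferential operator $M_f(1-\Delta_g)^{-\frac{d}{2}}$ for smooth $f$, the fiber integral giving the constant $\operatorname{Vol}(\mathbb{S}^{d-1})/(d(2\pi)^d)$, and uniform approximation together with continuity of positive traces in the $\mathcal{L}_{1,\infty}$ quasi-norm to pass to continuous $f$. The argument is correct and consistent with the paper's cited source.
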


A similar result holds for $\mathbb{R}^d$~\cite[Theorem~3.1.1]{LMSZVol2}. This motivates the convention that the functional
\[
a \mapsto \phi(a\langle D\rangle^{-d}), \quad a \in \A,
\]
is called the noncommutative integral for $d$-dimensional spectral triples $(\A, \Hc, D)$  (recall the notation $\langle x \rangle = (1+|x|^2)^{\frac{1}{2}}$). In NCG literature the most studied case is the one where $\phi$ is a Dixmier trace, and we too will focus on this. 
We refer to~\cite{LordPotapov2010, LordSukochev2010, LordSukochev2011, LSZVol1, LMSZVol2} for thorough studies of the noncommutative integral.

Finally, we recall two important spectral triples which we will use as examples in Section~\ref{S:Ergodicity}. For more details on the noncommutative torus (on which there is much literature) we refer to~\cite{HaLee2019,HaLee2019b} and~\cite[Section 12.3]{GVF2001}, for details on almost commutative manifolds to~\cite[Chapter~10]{Suijlekom2025}.

\begin{defn}
    Let $d\geq 2$ and let $\theta$ be a real $d \times d$ antisymmetric matrix. The noncommutative torus is the universal $C^*$-algebra $C(\mathbb{T}^d_\theta)$ generated by a family of unitary elements $\{u_n\}_{n\in \Z^d}$ subject to the relations
    \[
    u_n u_m = e^{\frac{i}{2}\langle n, \theta m\rangle}u_{n+m}, \quad n,m\in \Z^d.
    \]
    The functional
    \[
    \tau_\theta \bigg(\sum_{k \in \mathbb{Z}^d}c_k u_k \bigg):= c_0
    \]
    extends to a continuous faithful trace state on $C(\mathbb{T}^d_\theta).$ 
    The smooth subspace $C^\infty(\mathbb{T}^d_\theta)$ is the subalgebra of $x \in C(\mathbb{T}^d_\theta)$ for which $\widehat{x}(k) = \tau_\theta(xu_k^*)$ is a rapidly decaying sequence on $\Z^d.$
    The Hilbert space in the GNS representation corresponding to $\tau_\theta$ is denoted $L_2(\mathbb{T}^d_\theta),$ and $\{u_n\}_{n\in \Z^d}$ is an orthonormal basis for $L_2(\mathbb{T}^d_\theta).$ The self-adjoint densely defined operators $D_j$, $j=1, \ldots, d$ on $L_2(\mathbb{T}^d_\theta)$ are defined on the basis by
    \[
    D_j u_k := k_j u_k,\quad k=(k_1,\ldots,k_d)\in \Z^d.
    \]
    The operator $D = \sum_{j=1}^d D_j \otimes \gamma_j$ on $L_2(\mathbb{T}^d_\theta)\otimes \mathbb{C}^{N_d}$, where $\gamma_j$ are standard Clifford matrices on $\mathbb{C}^{N_d}$ with $N_d = 2^{\lfloor\frac{d}{2}\rfloor}$, gives a spectral triple
    \[
    (C^\infty(\mathbb{T}^d_\theta),L_2(\mathbb{T}^d_\theta)\otimes \mathbb{C}^{N_d}, D ),
    \]
    where we represent $C^\infty(\mathbb{T}^d_\theta)$ as operators on $L_2(\mathbb{T}^d_\theta)\otimes \mathbb{C}^{N_d}$ by acting on the first component~\cite[Section 12.3]{GVF2001}.
    We write $\Delta := -\sum_{j=1}^d D_j^2$ as an operator on $L_2(\mathbb{T}^d_\theta)$, so that $|D|= \sqrt{-\Delta} \otimes 1_{\mathbb{C}^{N_d}}$. 
\end{defn}

\begin{defn}
    A spectral triple $(\A, \Hc, D)$ is called even if equipped with a $\Z_2$-grading $\gamma$ on $\Hc$ such that $D\gamma = -\gamma D$ and $a\gamma = \gamma a$ for all $a \in \A$. The canonical spectral triple $(C^\infty(M), L_2(S), D_M)$ of an even-dimensional Riemannian spin manifold has a natural grading $\gamma_M$ making it an even spectral triple. Given such an even-dimensional manifold and a finite spectral triple $(\A_F, \Hc_F, D_F)$, meaning that $\Hc_F$ and $\A_F$ are finite-dimensional, we define the product spectral triple
    \[
    (C^\infty(M) \otimes \A_F, L_2(S) \otimes \Hc_F, D_M \otimes 1 + \gamma_M \otimes D_F).
    \]
    This spectral triple is called an almost-commutative manifold. 
\end{defn}

\section{Integration on truncated spectral triples}
\label{S:Integration}
Let us fix a closed self-adjoint operator $D$ on a separable Hilbert space $\Hc$ such that $\langle D\rangle^{-d}\in \mathcal{L}_{1,\infty}$, where $d>0$ and $\langle x \rangle := (1+|x|^2)^{\frac{1}{2}}$. We fix an extended limit $\omega \in \ell_{\infty}^*$ and assume that $\Tr_\omega(\langle D\rangle^{-d}) > 0$.
We write $P_\lambda := \chi_{[-\lambda, \lambda]}(D)$. This situation is modeled after (compact) $d$-dimensional spectral triples $(\A, \Hc, D)$.

We first provide the most straight-forward approach to the noncommutative integral on truncated triples, using standard techniques that are employed in quantum ergodicity~\cite{ColindeVerdiere1985}. We write 
\[
f(t) \sim C t^{-\alpha}
\]
to mean
\[
\lim_{t\to 0} t^{\alpha}f(t) = C.
\]

\begin{prop}\label{P:HeattoInt}
    Let $a \in B(\Hc)$. If there exist constants $C, C(a) \in \R$ with 
    \[
    \Tr(e^{-tD^2}) \sim C t^{-\frac{d}{2}} , \quad \Tr(a e^{-tD^2}) \sim C(a)t^{-\frac{d}{2}},
    \]
    then
    \[
    \frac{\Tr_\omega (a \langle D\rangle^{-d})}{\Tr_\omega(\langle D \rangle^{-d})} =  \lim_{\lambda \to \infty}\frac{\Tr(P_\lambda a P_\lambda)}{\Tr (P_\lambda)}.
    \]
\end{prop}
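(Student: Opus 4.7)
I would show both sides equal $(C(a)/C)\Tr_\omega(\langle D\rangle^{-d})$ by extracting the common ratio $C(a)/C$ from the heat trace asymptotics via parallel Tauberian arguments. By splitting $a$ into real and imaginary parts it suffices to consider $a$ self-adjoint, and setting $b := a + \|a\|I \geq 0$ reduces each half of the argument to the positive case.

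For the truncated ratio (right-hand side), the plan is to apply Karamata's Tauberian theorem. In an eigenbasis $\{e_j\}$ of $D$ with eigenvalues $\mu_j$, the function $u \mapsto \Tr(bP_{\sqrt u}) = \sum_{\mu_j^2\leq u}\langle e_j, b e_j\rangle$ is nondecreasing, and its Laplace transform in $u$ is $\Tr(be^{-tD^2}) \sim (C(a) + \|a\|C)t^{-d/2}$ by hypothesis and linearity. Karamata therefore gives $\Tr(bP_\lambda) \sim \frac{C(a)+\|a\|C}{\Gamma(d/2+1)}\lambda^d$; the special case $a=0$ supplies $\Tr(P_\lambda) \sim \frac{C}{\Gamma(d/2+1)}\lambda^d$, and subtracting $\|a\|\Tr(P_\lambda)$ leaves $\Tr(aP_\lambda) \sim \frac{C(a)}{\Gamma(d/2+1)}\lambda^d$. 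Cyclicity of the trace gives $\Tr(P_\lambda a P_\lambda) = \Tr(aP_\lambda)$, so the right-hand ratio converges to $C(a)/C$.

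For the noncommutative integral (left-hand side), the Weyl asymptotic just obtained gives $\lambda(k,\langle D\rangle^{-d}) \sim \frac{C/\Gamma(d/2+1)}{k}$ and hence $\Tr_\omega(\langle D\rangle^{-d}) = C/\Gamma(d/2+1)$ directly from the Dixmier trace definition. It remains to prove $\Tr_\omega(a\langle D\rangle^{-d}) = C(a)/\Gamma(d/2+1)$, which is subtler since $a$ need not commute with $D$. I would compute $\Tr(b\langle D\rangle^{-d}P_{|\mu_N|}) = \sum_{j=0}^N \langle\mu_j\rangle^{-d}\langle e_j, b e_j\rangle$ and apply Abel summation together with the Karamata asymptotic $\Tr(bP_\lambda) \sim \frac{K(b)}{\Gamma(d/2+1)}\lambda^d$ (where $K(b)$ is the leading heat trace coefficient of $b$) to conclude that these partial sums grow like $\frac{K(b)}{\Gamma(d/2+1)}\log N$, then identify this logarithmic rate with $\Tr_\omega(\langle D\rangle^{-d/2}b\langle D\rangle^{-d/2}) = \Tr_\omega(b\langle D\rangle^{-d})$ via a projection-approximation theorem for Dixmier traces from~\cite{LSZVol1}.

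The main obstacle is this last identification. The Ky Fan inequality immediately gives $\Tr(\langle D\rangle^{-d/2}b\langle D\rangle^{-d/2}P_{|\mu_N|}) \leq \sum_{k=0}^N \lambda(k, \langle D\rangle^{-d/2}b\langle D\rangle^{-d/2})$, which yields one direction upon averaging by $1/\log N$ and applying $\omega$. The reverse direction, showing that the spectral projections $P_{|\mu_N|}$ of $D$ track the top eigenprojections of $\langle D\rangle^{-d/2}b\langle D\rangle^{-d/2}$ tightly enough in the Cesaro sense, is the technical heart of the heat-trace-to-Dixmier-trace Tauberian passage and is where most of the work must be invested.
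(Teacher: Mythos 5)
Your treatment of the right-hand side is essentially the paper's: the paper also runs the Hardy--Littlewood (Karamata) Tauberian theorem on $\Tr(e^{-tD^2})$ and $\Tr(ae^{-tD^2})$ to get $\Tr(P_\lambda)\sim \tfrac{C}{\Gamma(d/2+1)}\lambda^d$ and $\Tr(aP_\lambda)\sim\tfrac{C(a)}{\Gamma(d/2+1)}\lambda^d$; your reduction to a positive $b=a+\|a\|$ is exactly the device needed to make the Stieltjes measure monotone, and $\Tr(P_\lambda aP_\lambda)=\Tr(aP_\lambda)$ is the same trivial cyclicity step. Where you diverge is the left-hand side. The paper dispatches it in one line by citing the heat-trace characterisation of the Dixmier trace, \cite[Corollary~8.1.3]{LSZVol1}: the hypothesis $\Tr(ae^{-tD^2})\sim C(a)t^{-d/2}$ \emph{already} gives $C(a)=\Gamma(\tfrac d2+1)\Tr_\omega(a\langle D\rangle^{-d})$, with no need to pass through eigenbases. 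You instead try to rebuild this from the diagonal formula, and the step you correctly flag as the ``technical heart'' --- that the logarithmic means of $\sum_{j\le N}\langle\mu_j\rangle^{-d}\langle e_j,be_j\rangle$ compute $\Tr_\omega(b\langle D\rangle^{-d})$ even though $b$ does not commute with $D$ --- is precisely \cite[Theorem~7.1.4(a)]{LSZVol1}, quoted later in the paper as Lemma~\ref{L:WeakestInt}. Your Abel-summation computation showing these diagonal sums grow like $\tfrac{K(b)}{\Gamma(d/2+1)}\log N$ is sound, and once Lemma~\ref{L:WeakestInt} is invoked the convergence of that sequence forces every extended limit to agree with the actual limit, closing the argument.

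So the only genuine hole is the reverse inequality in your final paragraph, which you leave open. It is not a new obstruction --- it is a standard, citable result --- but be aware that it is not provable by a soft Ky Fan/projection comparison alone: the proof in \cite{LSZVol1} goes through the fact that the commutator $[P_{|\mu_N|},\langle D\rangle^{-d/2}b\langle D\rangle^{-d/2}]$ contributes only a trace-class (hence Dixmier-negligible) error, i.e.\ one must genuinely control the off-diagonal part of $b$ in the eigenbasis of $D$, not merely compare eigenvalue counting functions. Either cite that theorem explicitly or expect to reproduce its commutator estimate; as written, the proof is incomplete at exactly that point.
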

\begin{proof}
    By~\cite[Corollary~8.1.3]{LSZVol1} we have that 
    \[
    C = \Gamma(\frac{d}{2}+1) \Tr_\omega(\langle D\rangle^{-d}), \quad C(a) = \Gamma(\frac{d}{2}+1) \Tr_\omega(a \langle D\rangle^{-d}).
    \]
    Recall that we assume $\Tr_\omega(\langle D\rangle^{-d})>0$. An application of the Hardy--Littlewood Tauberian theorem~\cite[Theorem~XII.5.2]{Feller1971} to the function $\Tr(e^{-tD^2})$ shows that 
    \[
    \Tr(P_\lambda) \sim \Tr_\omega(\langle D\rangle^{-d}) \lambda^{d}, \quad \lambda \to \infty.
    \] Applying the theorem again to the function $\Tr(a e^{-tD^2})$ then gives that $\lim_{\lambda \to \infty}\frac{\Tr(P_\lambda a P_\lambda)}{\Tr (P_\lambda)}$ exists and is equal to $\frac{\Tr_\omega(a\langle D\rangle^{-d})}{\Tr_\omega(\langle D\rangle^{-d})}$. 
\end{proof}

\begin{rem}   
    The Hardy-Littlewood Tauberian theorem implies that the condition $\Tr(e^{-tD^2})\sim Ct^{-\frac{d}{2}}$ as $t\to 0$ is equivalent to $\lambda(k,D^2)\sim \widetilde{C} k^{\frac{2}{d}}$ as $k\to\infty$~\cite[Theorem~XII.5.2]{Feller1971}.
    \end{rem}

\begin{defn}\label{D:WeylLaw}
    We say that $D^2$ (as fixed at the start of this section) satisfies a Weyl law if $\Tr(e^{-tD^2})\sim Ct^{-\frac{d}{2}}$, and it satisfies a local Weyl law for an operator $a \in B(\Hc)$ if $ \Tr(a e^{-tD^2}) \sim C(a)t^{-\frac{d}{2}}$. 
\end{defn}

    See~\cite{McDonaldSukochev2022WeylLaw} for an investigation of the validity of the (local) Weyl law for spectral triples, and~\cite{Ponge2023} for an extensive study of Weyl's law in relation to Connes' integral formula. The latter, work by Ponge, answers some questions regarding Weyl laws and the noncommutative integral related to measurability of operators.

Although the local Weyl laws hold for Riemannian manifolds and a wide class of spectral triples~\cite{GrubbSeeley1995,Vassilevich2007, EcksteinZajac2015, McDonaldSukochev2022WeylLaw}, there are spectral triples in which such behaviour does not hold, see for example~\cite[Example~5.7]{MOOIs}. In the remainder of this section we show what can be deduced without this condition.
We now fix an orthonormal basis $\{e_n\}_{n=0}^\infty$ of eigenvectors of $|D|$, ordered such that the corresponding eigenvalues $\{\lambda_n\}_{n=0}^\infty$ are non-decreasing. The following lemma is closely related to~\cite[Theorem~3.6]{LordSukochev2011}.

\begin{lem}\label{L:WeakestInt}
    Let $A \in B(\Hc)$. Then
    \[
    \Tr_\omega(A\langle D\rangle^{-d}) = \omega\bigg(\frac{1}{\log(n+2)} \sum_{k=0}^n \langle\lambda_k\rangle^{-d} \langle e_k, A e_k \rangle\bigg).
    \]
    If $D^2$ satisfies Weyl's law, i.e. $\lambda_k \sim C k^{\frac{1}{d}}$, this simplifies to
    \[
    \frac{\Tr_\omega (A\langle D\rangle^{-d})}{\Tr_\omega(\langle D\rangle^{-d})} =  \omega\bigg(\frac{1}{\log(n+2)} \sum_{k=0}^n  \frac{\langle e_k, A e_k \rangle}{k+1} \bigg).
    \]
\end{lem}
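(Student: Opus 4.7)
The plan is to compute the diagonal matrix elements of $A\langle D\rangle^{-d}$ in the basis $\{e_k\}$ and recognise their logarithmic Ces\`aro average as the Dixmier trace. First, $|D|e_k = \lambda_k e_k$ implies $\langle D\rangle^{-d}e_k = \langle\lambda_k\rangle^{-d}e_k$ by functional calculus, hence
\[
\langle e_k, A\langle D\rangle^{-d} e_k\rangle = \langle\lambda_k\rangle^{-d}\langle e_k, A e_k\rangle.
\]
The first identity of the lemma thereby reduces to the claim
\[
\Tr_\omega(A\langle D\rangle^{-d}) = \omega\!\left(\frac{1}{\log(n+2)}\sum_{k=0}^n \langle e_k, A\langle D\rangle^{-d} e_k\rangle\right).
\]

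To establish this, I would decompose $A$ as a complex linear combination of four positive bounded operators (via real/imaginary parts, each split into positive and negative parts) and apply linearity of both $\Tr_\omega$ and the log-Ces\`aro expression. For $A\geq 0$, the cyclic property of the Dixmier trace (valid since $\langle D\rangle^{-d/2}\in \mathcal{L}_{2,\infty}$) yields $\Tr_\omega(A\langle D\rangle^{-d}) = \Tr_\omega(B)$ with $B := \langle D\rangle^{-d/2}A\langle D\rangle^{-d/2}$ a positive operator in $\mathcal{L}_{1,\infty}$ satisfying $\langle e_k, B e_k\rangle = \langle\lambda_k\rangle^{-d}\langle e_k, A e_k\rangle$ and $0 \leq B \leq \|A\|\langle D\rangle^{-d}$. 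The identity then follows from the standard result (see~\cite{LSZVol1, LMSZVol2}) that for a positive $B \in \mathcal{L}_{1,\infty}$ dominated by an operator diagonal in $\{e_k\}$ with non-increasing eigenvalues,
\[
\Tr_\omega(B) = \omega\!\left(\frac{1}{\log(n+2)}\sum_{k=0}^n \langle e_k, B e_k\rangle\right).
\]

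For the Weyl-law specialisation, applying the first formula with $A = I$ and using $\langle\lambda_k\rangle^{-d}\sim C^{-d}(k+1)^{-1}$ gives $\Tr_\omega(\langle D\rangle^{-d}) = C^{-d}$. Writing $\langle\lambda_k\rangle^{-d} = \Tr_\omega(\langle D\rangle^{-d})(k+1)^{-1} + \epsilon_k$ with $(k+1)\epsilon_k \to 0$, the error satisfies $\left|\sum_{k=0}^n \epsilon_k \langle e_k, A e_k\rangle\right| \leq \|A\| \sum_{k=0}^n |\epsilon_k| = o(\log n)$ because $|\epsilon_k| = o(1/k)$, so it vanishes after division by $\log(n+2)$ under $\omega$, yielding the second formula.

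The main obstacle is the diagonal formula for the Dixmier trace invoked in the second paragraph. Ky Fan's majorisation supplies $\sum_{k=0}^n \langle e_k, B e_k\rangle \leq \sum_{k=0}^n \lambda(k, B)$ in the positive case, but the matching reverse bound in the log-Ces\`aro sense is the delicate point; it hinges on the dominance $B \leq \|A\|\langle D\rangle^{-d}$ together with the diagonalisation of the dominating operator in $\{e_k\}$ with non-increasing eigenvalues, which is what ensures that off-diagonal contributions of $B$ in this basis are negligible after $\omega$-averaging.
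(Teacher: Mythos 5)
Your argument is correct and, at bottom, rests on the same source as the paper: the paper's entire proof is a citation of \cite[Theorems~7.1.4(a) and 7.1.5(b)]{LSZVol1}. For the first identity, your reductions (splitting $A$ into positive parts, using cyclicity of $\Tr_\omega$ on $\mathcal{L}_{2,\infty}\cdot\mathcal{L}_{2,\infty}$ to pass to $B=\langle D\rangle^{-d/2}A\langle D\rangle^{-d/2}$) are valid, but be aware that they do not actually lower the difficulty: the ``standard result'' you then invoke for positive $B$ dominated by a diagonal operator with non-increasing eigenvalues is essentially Theorem~7.1.4(a) itself in different clothing (Douglas factorisation $B=V^{1/2}W^*WV^{1/2}$ with $W$ bounded converts one statement into the other), so the genuinely delicate content --- which, as you rightly note, fails for a general positive $B\in\mathcal{L}_{1,\infty}$ and an arbitrary ordering of the basis --- is still outsourced to the same theorem the paper cites. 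Your derivation of the Weyl-law specialisation, writing $\langle\lambda_k\rangle^{-d}=\Tr_\omega(\langle D\rangle^{-d})(k+1)^{-1}+\epsilon_k$ with $\epsilon_k=o(1/k)$ and observing that $\sum_{k\le n}|\epsilon_k|=o(\log n)$, is a correct and self-contained proof of the second claim, which the paper again only cites (\cite[Theorem~7.1.5(b)]{LSZVol1}, \cite{LordSukochev2011}); this is the one place where your write-up supplies an argument the paper omits.
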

\begin{proof}
    The first part is~\cite[Corollary~7.1.4(c)]{LSZVol1}, the second claim is~\cite[Theorem~7.1.5(a)]{LSZVol1}.
\end{proof}

What appears in the lemma above is the logarithmic mean $M: \ell_\infty \to \ell_\infty$, defined by
\begin{align*}
    M: x &\mapsto \bigg\{\frac{1}{\log(n+2)} \sum_{k=0}^n \frac{x_k}{k+1} \bigg\}_{n=0}^\infty.
\end{align*}
This can be compared with the Ces\`aro mean
\begin{align*}
    C: x &\mapsto \bigg\{\frac{1}{n+1} \sum_{k=0}^n x_k \bigg\}_{n=0}^\infty.
\end{align*}

\begin{lem}\label{L:LogToCes}
    For any sequence $x \in \ell_\infty$, we have
    \[
    (M(x))_n = (M \circ C(x))_n + o(1), \quad n \to \infty.
    \]
\end{lem}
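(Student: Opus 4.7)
The plan is to use summation by parts to reduce both $(M(x))_n$ and $(M\circ C(x))_n$ to expressions involving the partial sums $S_k := \sum_{j=0}^k x_j$, and then to show that the numerator of their difference is $O(1)$ before dividing by $\log(n+2)$.

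First, I would apply Abel summation to the defining sum for $(M(x))_n$. Writing $x_k = S_k - S_{k-1}$ (with $S_{-1} := 0$) and using the identity $\frac{1}{k+1} - \frac{1}{k+2} = \frac{1}{(k+1)(k+2)}$, one obtains
\[
\sum_{k=0}^n \frac{x_k}{k+1} = \frac{S_n}{n+1} + \sum_{k=0}^{n-1} \frac{S_k}{(k+1)(k+2)}.
\]
Meanwhile, directly from the definition of $C$,
\[
\sum_{k=0}^n \frac{(C(x))_k}{k+1} = \sum_{k=0}^n \frac{S_k}{(k+1)^2}.
\]

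Next, I would compare these two expressions term by term. The elementary identity $\frac{1}{(k+1)(k+2)} - \frac{1}{(k+1)^2} = -\frac{1}{(k+1)^2(k+2)}$ reduces the difference to
\[
\log(n+2)\bigl[(M(x))_n - (M\circ C(x))_n\bigr] = \frac{S_n}{n+1} - \frac{S_n}{(n+1)^2} - \sum_{k=0}^{n-1} \frac{S_k}{(k+1)^2(k+2)}.
\]
Since $x \in \ell_\infty$ gives $|S_k| \leq (k+1)\supnorm{x}$, the first boundary term is bounded by $\supnorm{x}$, the second tends to $0$, and the remaining sum is dominated by the convergent telescoping series $\supnorm{x}\sum_{k=0}^\infty \frac{1}{(k+1)(k+2)}$. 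Hence the right-hand side is $O(1)$, and dividing by $\log(n+2)\to\infty$ yields the required $o(1)$.

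The main (and essentially only) obstacle is the bookkeeping in the Abel transform: the naive bound $|x_k/(k+1)| \leq \supnorm{x}/(k+1)$ shows that each of $(M(x))_n$ and $(M\circ C(x))_n$ is individually $O(1)$ but produces no cancellation between them. Summation by parts is precisely what trades the $O(1/k)$ weight for the more favourable $O(1/k^2)$ weight on the partial sums $S_k$, at which point the trivial bound $|S_k| \leq (k+1)\supnorm{x}$ suffices to close the argument.
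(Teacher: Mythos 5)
Your proof is correct, and the computations check out: the Abel transform of $\sum_{k\le n} x_k/(k+1)$, the identity $\tfrac{1}{(k+1)(k+2)}-\tfrac{1}{(k+1)^2}=-\tfrac{1}{(k+1)^2(k+2)}$, and the resulting bound $O(1)$ on the unnormalised difference are all right. The paper's proof is also, at heart, summation by parts, but it is organised differently: it rewrites $\tfrac{x_k}{k+1}=(C(x))_k-(C(x))_{k-1}+\tfrac{1}{k+1}(C(x))_{k-1}$, telescopes to obtain $(M(x))_n=(M\circ T\circ C(x))_n+o(1)$ where $T$ is the right shift, and then removes the shift by citing the shift-invariance of $M$ modulo $o(1)$ (\cite[Lemma~6.2.12]{LSZVol1}). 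Your route works instead with the raw partial sums $S_k$ on both sides and compares them directly, which makes the argument self-contained (no external lemma about the shift) and in fact yields the quantitatively stronger conclusion that $(M(x))_n-(M\circ C(x))_n=O(1/\log(n+2))$. What the paper's formulation buys in exchange is the structural identity $M=M\circ T\circ C+o(1)$ in terms of the Ces\`aro operator, which is the form reused elsewhere in the literature; your version buys elementarity and an explicit rate. Either proof is acceptable.
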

\begin{proof}
    For $x \in \ell_\infty$ and $k \geq 0$ we have
    \begin{align*}
         \frac{x_k}{k+1}  &= \bigg(\frac{1}{k+1}\sum_{l=0}^k x_l\bigg) - \frac{k}{k+1}\bigg(\frac{1}{k}\sum_{l=0}^{k-1} x_l\bigg)\\
         &= (C(x))_k - (C(x))_{k-1} + \frac{1}{k+1} (C(x))_{k-1}.
    \end{align*}
    Hence, as $n\to \infty$
    \begin{align*}
        (M(x))_n &= \frac{1}{\log(n+2)} \sum_{k=0}^n \frac{x_k}{k+1}\\
        &= \frac{1}{\log(n+2)}\bigg( (C(x))_n + \sum_{k=1}^n \frac{1}{k+1} (C(x))_{k-1} \bigg)\\
        &= (M\circ T \circ C(x))_n + o(1),
    \end{align*}
    where $T : (x_0, x_1, x_2, \ldots) \mapsto (0, x_0, x_1, \ldots)$ is the right-shift operator on $\ell_\infty$. Finally, for any bounded sequence $a \in \ell_\infty$, we have that
    \[
    (M\circ T(a))_n - (M(a))_n = o(1), \quad n \to \infty,
    \]
    which can be found in~\cite[Lemma~6.2.12]{LSZVol1}.
\end{proof}

Since both $M$ and $C$ are regular transformations in Hardy's terminology~\cite[Chapter~III]{Hardy1949}, meaning that $M(x)_n \to c $ whenever $x_n \to c$, it is a consequence of Lemma~\ref{L:LogToCes} that for $x \in \ell_\infty$, if $C(x)_n \to c$ then $M(x)_n \to c$ as $n \to \infty$. We introduce one more crucial lemma. Namely, writing $Q_n$ for the projection onto $\{ e_0, \ldots, e_n\}$, we want to switch freely between 
\[
\frac{\Tr(P_\lambda a P_\lambda)}{\Tr(P_\lambda)}, \quad \frac{\Tr(Q_n a Q_n )}{\Tr(Q_n)}.
\]
The first can be written as $\frac{\Tr(Q_{N(\lambda)} a Q_{N(\lambda)} )}{\Tr(Q_{N(\lambda)})},$ where $N(\lambda)$ is the greatest $k\geq 0$ such that $\lambda_k\leq \lambda$, and thus can be interpreted as a subsequence of the second. The following lemma can therefore be applied, which appeared as~\cite[Lemma~4.8]{AzamovHekkelman2022} in a slightly weaker form and in a different context.

\begin{lem}\label{L:AHMSZ}
    Let $\phi: \mathbb{N} \to \R_{> 0}$ be an increasing function such that $\phi(n)\to \infty$ as $n\to \infty$, let $\{a_k\}_{k \in \mathbb{N}} \subseteq \mathbb{R}$ be a sequence such that $\big\{\frac{1}{\phi(n)}\sum_{k=0}^n |a_k| \big\}_{n=0}^\infty$ is bounded, and let $\{k_0, k_1, \dots \}$ be an infinite, increasing sequence of positive integers such that 
    \[
    \lim_{n\to \infty} \frac{\phi(k_{n+1})}{\phi(k_n)} = 1,
    \] 
    and
    \[
    \frac{1}{\phi(k_{n})} \sum_{k=k_{n-1}+1}^{k_{n}} |a_k| = o(1), \quad n\to\infty.
    \]
    Labeling $k_{i_n} := \min\{k_i : k_{i} \geq n \}$,     
    we have that
    \[
    \frac{1}{\phi(n)}\sum_{k=0}^n a_k = \frac{1}{\phi(k_{i_n})}\sum_{k=0}^{k_{i_n}} a_k + o(1), \quad n \to \infty.
    \]
\end{lem}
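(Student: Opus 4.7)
The plan is to decompose the larger partial sum $\sum_{k=0}^{k_{i_n}} a_k$ as $\sum_{k=0}^{n} a_k + \sum_{k=n+1}^{k_{i_n}} a_k$ and show that, after dividing by the appropriate normalisation, each resulting piece agrees with $\tfrac{1}{\phi(n)}\sum_{k=0}^n a_k$ up to an $o(1)$ error. The proof should be entirely elementary once the right estimates are written down.

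First I would control the tail $\sum_{k=n+1}^{k_{i_n}} a_k$. By the definition $k_{i_n}=\min\{k_i : k_i\ge n\}$, we have $k_{i_n-1}<n\le k_{i_n}$ (for $n$ large enough that $i_n\ge 1$), so
\[
\Bigl|\sum_{k=n+1}^{k_{i_n}} a_k\Bigr|\;\le\;\sum_{k=k_{i_n-1}+1}^{k_{i_n}} |a_k|\;=\;o(\phi(k_{i_n}))
\]
by the second hypothesis on $\{k_i\}$, provided $i_n\to\infty$ as $n\to\infty$. The latter follows from the fact that $k_i$ is strictly increasing (so $k_i\to\infty$) and $n\le k_{i_n}$. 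Dividing by $\phi(k_{i_n})$ yields
\[
\frac{1}{\phi(k_{i_n})}\sum_{k=0}^{k_{i_n}} a_k \;=\; \frac{1}{\phi(k_{i_n})}\sum_{k=0}^{n} a_k + o(1).
\]

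Next I would compare the two normalisations $\phi(n)$ and $\phi(k_{i_n})$. Since $\phi$ is increasing and $k_{i_n-1}<n\le k_{i_n}$, a sandwich argument gives
\[
\frac{\phi(k_{i_n-1})}{\phi(k_{i_n})}\;\le\;\frac{\phi(n)}{\phi(k_{i_n})}\;\le\;1,
\]
and the hypothesis $\phi(k_{n+1})/\phi(k_n)\to 1$ forces $\phi(n)/\phi(k_{i_n})\to 1$. The sequence $\tfrac{1}{\phi(n)}\sum_{k=0}^{n}a_k$ is bounded thanks to the absolute boundedness hypothesis on $\tfrac{1}{\phi(n)}\sum_{k=0}^n|a_k|$, so
\[
\frac{1}{\phi(k_{i_n})}\sum_{k=0}^{n}a_k \;=\; \frac{\phi(n)}{\phi(k_{i_n})}\cdot\frac{1}{\phi(n)}\sum_{k=0}^{n}a_k \;=\; \frac{1}{\phi(n)}\sum_{k=0}^{n}a_k + o(1).
\]
Chaining these two identities produces the claimed equality.

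I do not anticipate any serious obstacle; the most delicate point is simply checking that $i_n\to\infty$ (so that the $o(1)$ hypotheses on the blocks $\{k_{i_n-1}+1,\ldots,k_{i_n}\}$ genuinely apply for large $n$), and handling the trivial edge case $i_n=0$ for small $n$, which is irrelevant to the asymptotic statement. Everything else is a combination of the triangle inequality, monotonicity of $\phi$, and the two hypotheses on the subsequence $\{k_i\}$.
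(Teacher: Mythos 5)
Your proof is correct and follows essentially the same route as the paper's: both arguments compare the two partial sums by controlling the tail $\sum_{k=n+1}^{k_{i_n}}a_k$ via the block hypothesis and the normalisation mismatch via $\phi(k_{i_n-1})/\phi(k_{i_n})\to 1$ together with monotonicity of $\phi$. The only (cosmetic) difference is that the paper first reduces to positive sequences and proves two one-sided inequalities, whereas you handle signed sequences directly with the triangle inequality.
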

\begin{proof}
Without loss of generality, we can assume that $\{a_k\}_{k\in \N}$ is a positive sequence. We have \begin{align*}
    \frac{1}{\phi(n)} \sum_{k=1}^n a_k -  \frac{1}{\phi(k_{i_n})} \sum_{k=1}^{k_{i_n}} a_k 
    &\leq  \bigg( \frac{\phi(k_{i_n})}{\phi(k_{i_n-1})}-1\bigg)\frac{1}{\phi(k_{i_n})}\sum_{k=1}^{k_{i_n}} a_k =o(1);\\
    \frac{1}{\phi(k_{i_n})} \sum_{k=1}^{k_{i_n}} a_k-\frac{1}{\phi(n)} \sum_{k=1}^n a_k & \leq \frac{1}{\phi(k_{i_n})} \sum_{k=k_{i_n-1}+1}^{k_{i_n}} a_k =o(1). 
\end{align*}
\end{proof}

We can now prove the main result of this section.
\begin{thm}\label{T:Log-CesaroMean}
    Let $A \in B(\Hc)$. If $D^2$ satisfies Weyl's law (Definition~\ref{D:WeylLaw}), then
    \begin{equation}\label{eq:mainthmeq}
    \frac{\Tr_\omega(A\langle D\rangle^{-d})}{\Tr_\omega(\langle D\rangle^{-d})} =(\omega \circ M)\big(\langle e_n, A e_n \rangle\big) =(\omega \circ M) \bigg( \frac{\Tr(Q_n A Q_n)}{\Tr(Q_n)} \bigg)=
     (\omega \circ M) \bigg( \frac{\Tr(P_{\lambda_n} A P_{\lambda_n})}{\Tr(P_{\lambda_n})} \bigg).
    \end{equation}
    If furthermore $Q$ is an operator with $\bigcap_{n\geq 0}\dom(D^n) \subseteq \dom Q$ such that for some $s\geq -d,$ $Q\langle D\rangle^{-s}$ extends to a bounded operator, we have
    \begin{align}
        \Tr_\omega (Q) &= \omega \bigg(\frac{\Tr(P_{\lambda_n}QP_{\lambda_n})}{\log(\Tr(P_{\lambda_n}))} \bigg), \quad s = -d;\label{eq:mainthmeq2}\\
        \frac{\Tr_\omega (Q\langle D\rangle^{-s-d})}{\big(\Tr_\omega(\langle D\rangle^{-d})\big)^{\frac{s}{d}+1}} &= \Big(\frac{s}{d}+1\Big) \omega \circ M\bigg(\frac{\Tr(P_{\lambda_n} QP_{\lambda_n})}{\Tr(P_{\lambda_n})^{\frac{s}{d}+1}}\bigg), \quad s > -d.\label{eq:mainthmeq3}
    \end{align}
\end{thm}
\begin{proof}
    The first equality in equation~\eqref{eq:mainthmeq} appeared in Lemma~\ref{L:WeakestInt}, the second equality is a consequence of Lemma~\ref{L:LogToCes} and the trivial identity
    \[
     \frac{\Tr(Q_n A Q_n)}{\Tr(Q_n)} = \frac{1}{n+1} \sum_{k=0}^n \langle e_k, A e_k\rangle.
    \] 
    The last equality follows from Lemma~\ref{L:AHMSZ} when taking $\phi(n) = n+1$, since the Weyl law gives that $\frac{N(\lambda_n)}{N(\lambda_{n+1})} \to 1.$ The assumption
    \[
    \frac{1}{N(\lambda_n)} \sum_{k=N(\lambda_{n-1})+1}^{N(\lambda_n)} \langle e_k, A e_k\rangle = o(1), \quad n\to\infty
    \]
    in Lemma~\ref{L:AHMSZ} is satisfied, since
    \[
    \frac{1}{N(\lambda_n)} \sum_{k=N(\lambda_{n-1})+1}^{N(\lambda_n)}| \langle e_k, A e_k\rangle | \leq \|A\|_\infty\frac{N(\lambda_n) - N(\lambda_{n-1})}{N(\lambda_{n})} = o(1), \quad n \to \infty.
    \]

    Now take an operator $Q$ with $\bigcap_{n\geq 0}\dom(D^n) \subseteq \dom Q$ such that $Q\langle D\rangle^{-s}$ extends to a bounded operator. For $s=-d$, the given formula for $\Tr_\omega(Q)$, equation~\eqref{eq:mainthmeq2}, is a combination of Lemma~\ref{L:WeakestInt} and Lemma~\ref{L:AHMSZ}. For equation~\eqref{eq:mainthmeq3} we take $s \neq -d$. First, due to Weyl's law
    \begin{align*}
       \langle\lambda_k\rangle^{-s-d} = \big(\Tr_\omega(\langle D\rangle^{-d})\big)^{\frac{s}{d}+1} (k+1)^{-\frac{s}{d}-1} + o\big((k+1)^{-\frac{s}{d}-1}\big), \quad k \to \infty
    \end{align*}
    and hence, since $(k+1)^{-\frac{s}{d}}\langle e_k, Qe_k \rangle$ is bounded, we have that
    \begin{align*}
       \langle\lambda_k\rangle^{-s-d} \langle e_k, Q e_k \rangle = \big(\Tr_\omega(\langle D\rangle^{-d})\big)^{\frac{s}{d}+1} (k+1)^{-\frac{s}{d}-1} \langle e_k, Q e_k\rangle + o\big((k+1)^{-1}\big), \quad k \to \infty.
    \end{align*}
    Now applying Lemma~\ref{L:WeakestInt} and then Lemma~\ref{L:LogToCes},
    \begin{equation}\label{eq:proofeq1}\begin{split}
        \Tr_\omega (Q\langle D\rangle^{-s-d}) &=
        \omega \bigg(\frac{1}{\log(n+2)} \sum_{k \leq n} \langle\lambda_k\rangle^{-s-d}\langle e_k, Q e_k \rangle \bigg)\\
        &=\big(\Tr_\omega(\langle D\rangle^{-d})\big)^{\frac{s}{d}+1} \omega \circ M \big( (n+1)^{-\frac{s}{d}}\langle e_n, Q e_n \rangle \big)\\
        &=\big(\Tr_\omega(\langle D\rangle^{-d})\big)^{\frac{s}{d}+1} \omega \circ M \bigg( \frac{1}{n+1} \sum_{k \leq n}(k+1)^{-\frac{s}{d}}\langle e_k, Q e_k \rangle \bigg).
    \end{split}\end{equation}
    Using Abel's summation formula, as $n \to \infty$
    \begin{align*}
         \frac{1}{n+1} \sum_{k \leq n}(k+1)^{-\frac{s}{d}}\langle e_k, Q e_k \rangle  &= (n+1)^{-\frac{s}{d}-1}\sum_{k \leq n} \langle e_k, Q e_k \rangle\\
         &\quad- \frac{1}{n+1}\sum_{k \leq n-1}\big((k+2)^{-\frac{s}{d}}-(k+1)^{-\frac{s}{d}}\big) \sum_{j \leq k}\langle e_j, Q e_j \rangle.
    \end{align*}
    By Taylor's formula, we have
    \[
        (k+2)^{-\frac{s}{d}}-(k+1)^{-\frac{s}{d}}+\frac{s}{d}(k+1)^{-\frac{s}{d}-1} = \frac{s}{d}\big(\frac{s}{d}+1\big)\int_0^1 (1-\theta)(k+1+\theta)^{-\frac{s}{d}-2}\,d\theta.
    \]
    Therefore    
    \begin{align*}
         \frac{1}{n+1} \sum_{k \leq n}(k+1)^{-\frac{s}{d}}\langle e_k, Q e_k \rangle &=  (n+1)^{-\frac{s}{d}-1}\sum_{k \leq n} \langle e_k, Q e_k \rangle + \frac{s}{d} C\big( \big\{(k+1)^{-\frac{s}{d}-1} \sum_{j \leq k}\langle e_j, Q e_j \rangle\big\}_{k=0}^\infty\big)_n\\
         &\quad -\frac{s}{d}\big(\frac{s}{d}+1\big)\int_0^1 (1-\theta) C\big(\big\{(k+1+\theta)^{-\frac{s}{d}-2}\sum_{j\leq k}\langle e_j,Qe_j\rangle\big\}_{k=0}^\infty\big)_n\,d\theta,
    \end{align*}
    where $C: \ell_\infty \to \ell_\infty$ is the Ces\`aro operator. 
    Since $(j+1)^{-\frac{s}{d}}\langle e_j,Qe_j\rangle$ is bounded and $s>-d$ we have
    \[
        \big|(k+1+\theta)^{-\frac{s}{d}-2}\sum_{j\leq k}\langle e_j,Qe_j\rangle\big|=
            O((k+1)^{-1}),\quad k\to\infty.
    \]
    Thus
    \begin{align*}
        \frac{1}{n+1} \sum_{k \leq n}(k+1)^{-\frac{s}{d}}\langle e_k, Q e_k \rangle & =  (n+1)^{-\frac{s}{d}-1}\sum_{k \leq n} \langle e_k, Q e_k \rangle \\
        &\quad + \frac{s}{d} C\big( \big\{(k+1)^{-\frac{s}{d}-1} \sum_{j \leq k}\langle e_j, Q e_j \rangle\big\}_{k=0}^\infty\big)_n + O\Big(\frac{\log(n+2)}{n+1}\Big).
    \end{align*}
    Combining this with equation~\eqref{eq:proofeq1} and using Lemma~\ref{L:LogToCes} again, we have
    \begin{align*}
        \Tr_\omega (Q\langle D\rangle^{-s-d}) &= \big(\Tr_\omega(\langle D\rangle^{-d})\big)^{\frac{s}{d}+1} \big(1+\frac{s}{d} \big) \omega \circ M\bigg(\frac{1}{(n+1)^{\frac{s}{d}+1}}\sum_{k \leq n} \langle e_k, Q e_k \rangle \bigg).
        \end{align*}
    To apply Lemma~\ref{L:AHMSZ}, taking $\phi(n) = (n+1)^{\frac{s}{d}+1}$ and $k_n = N(\lambda_n)$, we need to check that
    \begin{align*}
        \frac{1}{N(\lambda_n)^{\frac{s}{d}+1}} \sum_{k=N(\lambda_{n-1})+1}^{N(\lambda_n)} |\langle e_k, Q e_k\rangle | &\lesssim \frac{1}{N(\lambda_n)^{\frac{s}{d}+1}}\sum_{k=N(\lambda_{n-1})+1}^{N(\lambda_n)} k^{\frac
    {s}{d}}\\
    &\lesssim \frac{N(\lambda_n)^{\frac{s}{d}+1}-N(\lambda_{n-1})^{\frac{s}{d}+1}}{N(\lambda_n)^{\frac{s}{d}+1}}\\
    &= o(1), \quad n \to \infty.
    \end{align*}
    Hence Lemma~\ref{L:AHMSZ} applies, and we conclude that
    \begin{align*}
        \Tr_\omega (Q\langle D\rangle^{-s-d})    &= \big(\frac{s}{d}+1\big)\big(\Tr_\omega(\langle D\rangle^{-d})\big)^{\frac{s}{d}+1} \omega \circ M\bigg(\frac{\Tr(P_{\lambda_n} QP_{\lambda_n})}{\Tr(P_{\lambda_n})^{\frac{s}{d}+1}}\bigg).
    \end{align*}
\end{proof}

As an obvious consequence of Theorem~\ref{T:Log-CesaroMean}, if for $A \in B(\Hc)$
\[
\frac{\Tr(P_\lambda AP_\lambda)}{\Tr(P_\lambda)}
\]
converges it follows that, provided $D^2$ satisfies Weyl's law, the limit must necessarily be the noncommutative integral of $A$. Furthermore, if the noncommutative integral is independent of $\omega$, meaning that $A\langle D\rangle^{-d}$ is Dixmier measurable (see e.g.~\cite{LSZVol1,LMSZVol2,Ponge2023}) one can replace $\omega \circ M$ by $\lim \circ M$ on the right hand sides of Theorem~\ref{T:Log-CesaroMean}. Finally, with a Weyl law, for self-adjoint $A \in B(\Hc)$ we have
\begin{align*}
\liminf_{k \to \infty} \langle e_k, A e_k \rangle \leq \liminf_{\lambda \to \infty} \frac{\Tr(P_{\lambda} A P_{\lambda})}{\Tr(P_{\lambda})} \leq \frac{\Tr_\omega(A\langle D\rangle^{-d})}{\Tr_\omega(\langle D \rangle^{-d})} \leq \limsup_{\lambda \to \infty} \frac{\Tr(P_{\lambda} A P_{\lambda})}{\Tr(P_{\lambda})} \leq \limsup_{k \to \infty} \langle e_k, A e_k \rangle.  
\end{align*}

All results achieved in this section are different flavours of the observation that the noncommutative integral is the limit point --- in a weak, averaging notion --- of the sequence $\{\langle e_k,  A e_k \rangle \}_{k=0}^\infty$. For the circle $\mathbb{T}$ this is not surprising; given $f =\sum_{k=-\infty}^\infty a_k e_k \in L_1(\mathbb{T})$ in Fourier basis, we have for \textit{every} $k \in \Z$
\[
\langle  e_k, M_f e_k \rangle = a_0 = \int_{\mathbb{T}} f(t) \, dt.
\]
More generally, Proposition~\ref{P:HeattoInt} combined with Connes' integral formula (Theorem~\ref{T:ConnesIntegration}) and Lemma~\ref{L:AHMSZ} shows that for any $d$-dimensional closed Riemannian manifold $M$ with volume form $\nu_g$ we have that the Ces\`aro mean of the sequence 
\[
\langle e_k, M_f  e_k \rangle, \quad f \in C(M)
\]
converges to $ \int_M f\, d\nu_g$. This fact is precisely what started investigations into quantum ergodicity. Recall that this covers the study of to what extent the matrix elements $\langle e_k,  M_f e_k \rangle$ themselves converge to an integral of $f$. More details will be provided in Section~\ref{S:Ergodicity}.

Previously, in~\cite[Section~7.5]{LSZVol1}\cite[Example~3.10]{LordSukochev2011} it had already been observed that for spectral triples $(\A, \Hc, D)$ where $D^2$ satisfies Weyl's law that if the noncommutative integral
    \[
     \frac{\Tr_\omega(a \langle D\rangle^{-d})}{\Tr_\omega( \langle D\rangle^{-d})}, \quad a \in \A,
    \]
    is independent of $\omega$, then
    \[
    \frac{1}{\log(n+2)} \sum_{k=0}^n \frac{\langle  e_k, a e_k\rangle}{k+1}, \quad a \in \A,
    \]
    converges as $n\to \infty$, which was interpreted as being related to quantum ergodicity.

In quantum ergodicity and related fields, there is a vast literature on the properties and asymptotics of the operators $P_\lambda a P_\lambda$. Through the results established in this section, the link with Connes' integral formula unlocks this literature for study from the perspective of noncommutative geometry. One result from this cross-pollination is a Szeg\H{o} limit theorem for truncated spectral triples. 

\section{Szeg\H{o} limit theorem}\label{S:Szego}
Szeg\H{o} proved various limit theorems concerning determinants of Toeplitz matrices, inspired by a conjecture by P\'olya and after work on these determinants by Toeplitz, Caratheodory and Fej\'er, see~\cite{Szego1915} and references therein. Much later, Widom provided a generalisation of these results with a simplified proof~\cite{Widom1979}, see also~\cite{LaptevSafarov1996} for a version for elliptic selfadjoint (pseudo)differential operators on manifolds without boundary. We now provide a translation of the results of Widom into noncommutative geometry. We thank Magnus Goffeng for pointing out that  instead of requiring that $[|D|,A]$ is bounded, it suffices to assume in the following lemma that $[D,A]$ is bounded.

\begin{lem}[\cite{Widom1979}]\label{L:Widom}
    Let $D^2$ satisfy Weyl's law (Definition~\ref{D:WeylLaw}), and let $A\in B(\Hc)$ and $B \in B(\Hc)$ map $\dom|D|$ into itself, and be such that $[D,A]$ and $[D,B]$ are bounded. 
    Then
    \[
    \lim_{\lambda \to \infty} \frac{\Tr(P_\lambda A (1-P_\lambda) B P_\lambda)}{\Tr(P_\lambda)} = 0.
    \]
\end{lem}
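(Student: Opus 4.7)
The plan is to apply the Cauchy--Schwarz inequality for the trace norm to reduce the claim to the key Hilbert--Schmidt commutator estimate $\|[P_\lambda, A]\|_2^2 = o(\Tr(P_\lambda))$, which I will then prove by a cutoff argument that plays the boundedness of $[|D|,A]$ against the Weyl asymptotics of $\Tr(P_\lambda)$.

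The first step uses the elementary identity
\[
[P_\lambda, A] = P_\lambda A (1-P_\lambda) - (1-P_\lambda) A P_\lambda,
\]
whose two summands live in orthogonal off-diagonal blocks, so $\|P_\lambda A (1-P_\lambda)\|_2 \leq \|[P_\lambda, A]\|_2$ and likewise for $B$. Trace-norm Cauchy--Schwarz then yields
\[
|\Tr(P_\lambda A (1-P_\lambda) B P_\lambda)| \leq \|P_\lambda A(1-P_\lambda)\|_2\,\|(1-P_\lambda)BP_\lambda\|_2 \leq \|[P_\lambda, A]\|_2\,\|[P_\lambda, B]\|_2,
\]
so it suffices to prove $\|[P_\lambda, A]\|_2^2 = o(\Tr(P_\lambda))$, and symmetrically for $B$.

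For the main estimate I would pass to the orthonormal basis $\{e_n\}$ of $|D|$-eigenvectors with eigenvalues $\{\lambda_n\}$, in which $P_\lambda$ is diagonal and
\[
\|[P_\lambda, A]\|_2^2 = 2\sum_{\lambda_m \leq \lambda < \lambda_n} |\langle e_m, A e_n\rangle|^2.
\]
Writing $\tilde A := [|D|, A]$ with $M := \|\tilde A\|_\infty$, the relation $\langle e_m, \tilde A e_n\rangle = (\lambda_m - \lambda_n)\langle e_m, A e_n\rangle$ yields $\sum_n (\lambda_m - \lambda_n)^2 |\langle e_m, A e_n\rangle|^2 \leq M^2$ for each $m$. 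For a cutoff $\epsilon > 0$, I would split the above sum into the near-diagonal part $S_1$ (where $\lambda_n - \lambda_m \leq \epsilon$) and the far part $S_2$ (where $\lambda_n - \lambda_m > \epsilon$). The near-diagonal part is confined to an annulus of width $\epsilon$ around $\lambda$, and the crude row-sum bound $\sum_n |\langle e_m, A e_n\rangle|^2 \leq \|A\|_\infty^2$ gives
\[
S_1 \leq \|A\|_\infty^2 \bigl(\Tr(P_\lambda) - \Tr(P_{\lambda - \epsilon})\bigr),
\]
which by the Weyl law $\Tr(P_\lambda) \sim c\lambda^d$ (obtained via the Hardy--Littlewood Tauberian theorem exactly as in Proposition~\ref{P:HeattoInt}) is $o(\Tr(P_\lambda))$ for fixed $\epsilon$ as $\lambda\to\infty$. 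For the far part, the gap condition gives $|\langle e_m, A e_n\rangle| \leq |\langle e_m, \tilde A e_n\rangle|/\epsilon$, so
\[
S_2 \leq \epsilon^{-2}\sum_{\lambda_m \leq \lambda} \|\tilde A e_m\|^2 \leq M^2 \Tr(P_\lambda)/\epsilon^2.
\]

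Putting these together, $\limsup_{\lambda\to\infty} \|[P_\lambda, A]\|_2^2/\Tr(P_\lambda) \leq M^2/\epsilon^2$ for every $\epsilon > 0$, which forces the limit to vanish. The hard part is precisely this commutator estimate: because we have only the \emph{operator-norm} bound on $[|D|, A]$ rather than a Schatten bound, the cutoff has to trade a lower-dimensional Weyl count near the boundary of the spectral interval against the $\epsilon^{-2}$ decay of the off-diagonal tail -- the same mechanism that underlies the classical Szeg\H{o}-type estimates imported from Widom's work.
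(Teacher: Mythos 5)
Your proof is correct and follows essentially the same route as the paper's (which is Widom's original argument): a Cauchy--Schwarz reduction to the Hilbert--Schmidt estimate $\|P_\lambda A(1-P_\lambda)\|_{HS}^2=o(\Tr(P_\lambda))$, then a cutoff at spectral distance $\epsilon$ that trades the Weyl count of the annulus $(\lambda-\epsilon,\lambda]$ against the $\epsilon^{-2}\|[|D|,A]\|_\infty^2$ bound on the far off-diagonal block; your $\epsilon$ is exactly the paper's $N$, and your matrix-element splitting is the basis-free splitting $1-P_\lambda=(P_{\lambda+N}-P_\lambda)+(1-P_{\lambda+N})$ in disguise. The only cosmetic point is that $\|[P_\lambda,A]\|_2^2$ equals the sum over \emph{both} off-diagonal blocks (the factor $2$ is exact only for self-adjoint $A$), but both blocks are handled identically, so nothing is affected.
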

\begin{proof}
    First, $[D,A]$ being bounded implies that $[\langle D \rangle^{\frac{1}{2}}, A]$ is bounded due to the combination of~\cite[Theorem~6]{MOOIs} and~\cite[Proposition~5.1]{MOOIs} (alternatively, see~\cite[Lemma~10.13]{GVF2001}). Hence, replacing $D$ by $\langle D \rangle^{\frac{1}{2}}$, we can assume that $D$ is positive and that $[|D|,A]$ and $[|D|, B]$ are bounded. Then, by the Cauchy-Schwarz inequality, an equivalent formulation of the statement is that for every $B$ such that $[|D|,B]$ is bounded, we have  
    \[
    \lim_{\lambda \to\infty}\frac{\|P_\lambda B (1-P_\lambda)\|_{HS}^2}{\Tr(P_\lambda)} = 0,
    \]
    where $\| \cdot \|_{HS}$ is the Hilbert--Schmidt norm. The following argument is essentially due to Widom~\cite[p.~145]{Widom1979},  see also \cite[Lemma 3.4]{Guillemin1979}.
    
    If $\lambda_n, \lambda_m$ are distinct eigenvalues of $|D|$ with corresponding spectral projections $E_{\lambda_i} = \chi_{\{\lambda_i\}}(|D|)$, then
    \[
    E_{\lambda_n}BE_{\lambda_m} = \frac{1}{\lambda_n-\lambda_m}E_{\lambda_n}[|D|,B]E_{\lambda_m}. 
    \]
    For $N>0$, and writing $\{e_k\}_{k \in \N}$ for an orthonormal basis of $\Hc$ consisting of eigenvectors of $|D|$ with corresponding eigenvalues $\{\lambda_k\}_{k \in \N}$, we therefore have
    \begin{align*}
        \|P_{\lambda}B(1-P_{\lambda+N})\|_{HS}^2 &= \sum_{\substack{\lambda_n> \lambda+N\\\lambda_m\leq \lambda}} |\langle e_m, P_{\lambda}B(1-P_{\lambda+N}) e_n\rangle|^2\\
        &= \sum_{\substack{\lambda_n> \lambda+N\\\lambda_m\leq \lambda}} \frac{1}{(\lambda_n-\lambda_m)^2} |\langle e_m, P_{\lambda}[|D|,B](1-P_{\lambda+N}) e_n\rangle|^2 \\
        &\leq N^{-2}\|P_{\lambda}[|D|,B](1-P_{\lambda+N})\|_{HS}^2.
    \end{align*}
    By the triangle inequality, we have
    \begin{align*}
        \|P_{\lambda}B(1-P_{\lambda})\|_{HS}^2&\leq 2\|P_{\lambda}B(P_{\lambda+N}-P_{\lambda})\|_{HS}^2+2\|P_{\lambda}B(1-P_{\lambda+N})\|_{HS}^2\\
        &\leq 2\|B\|_{\infty}^2\Tr(P_{\lambda+N}-P_{\lambda})+2N^{-2}\Tr(P_{\lambda})\|[|D|,B]\|_{\infty}^2.
    \end{align*}
    Weyl's law implies that
    \[
        \Tr(P_{\lambda+N}-P_{\lambda}) = o(\Tr(P_{\lambda})),\quad \lambda\to\infty,
    \]
    and hence
    \[
        \limsup_{\lambda\to\infty} \frac{\|P_{\lambda}B(1-P_{\lambda})\|_{HS}^2}{\Tr(P_{\lambda})} \leq 2N^{-2}\|[|D|,B]\|_{\infty}^2.
    \]
    Since $N$ is arbitrary, this completes the proof.
\end{proof}

Following Widom~\cite{Widom1979} further, Lemma~\ref{L:Widom} can be combined with the characterisation of Connes' integral theorem in Theorem~\ref{T:Log-CesaroMean} into a Szeg\H{o} limit theorem.

\begin{thm}\label{T:Szego}
    Let $D^2$ satisfy Weyl's law (Definition~\ref{D:WeylLaw}), and let $A \in B(\Hc)$ be self-adjoint and such that it maps $\dom|D|$ into itself and $[D,A]$ is bounded. 
    Then 
    \begin{equation}\label{eq:Szegoeq1}
    (\omega \circ M) \bigg( \frac{\Tr(f(P_{\lambda_n} A P_{\lambda_n}))}{\Tr(P_{\lambda_n})}\bigg)= \frac{\Tr_\omega(f(A)\langle D\rangle^{-d})}{\Tr_\omega(\langle D \rangle^{-d})}, \quad f\in C(\R),\, f(0)=0.
    \end{equation}
    If for every positive integer $k$ there is some constant $C_k\in \R$ with
    \[
    \Tr(A^ke^{-tD^2}) \sim C_k t^{-\frac{d}{2}},
    \]
    then for every $f \in C(\R)$ with $f(0)=0$ we have
    \begin{equation}\label{eq:Szego2}
    \lim_{\lambda \to \infty}\frac{\Tr(f(P_{\lambda} A P_{\lambda}))}{\Tr(P_{\lambda})}=\frac{\Tr_\omega( f(A)\langle D\rangle^{-d})}{\Tr_\omega(\langle D\rangle^{-d})}.
    \end{equation}
\end{thm}
\begin{proof}
    To prove equation~\eqref{eq:Szegoeq1}, we sketch the proof of the stronger identity
    \begin{equation}\label{eq:StrongerSzego}
    (\omega \circ M) \bigg( \frac{\Tr( P_{\lambda_n}f(P_{\lambda_n} A P_{\lambda_n}) P_{\lambda_n})}{\Tr(P_{\lambda_n})}\bigg)= \frac{\Tr_\omega(f(A)\langle D\rangle^{-d})}{\Tr_\omega(\langle D \rangle^{-d})}, \quad f \in C(\R),
    \end{equation}
    where it is no longer necessary that $f(0)=0$.
    Lemma~\ref{L:Widom} gives that
    \begin{equation}\label{e:Widom_identity}
    \lim_{\lambda \to \infty}\frac{\Tr\big(P_\lambda A^k P_\lambda - (P_\lambda A P_\lambda)^k\big)}{\Tr(P_\lambda)} = 0, \quad k\geq 1,
    \end{equation}
    which implies equation~\eqref{eq:StrongerSzego} for polynomial $f$ through Theorem~\ref{T:Log-CesaroMean}. An application of the Stone--Weierstrass theorem provides an extension to continuous functions.  Details can be found in~\cite[p.~144]{Widom1979}.  

    Equation~\eqref{eq:Szego2} for polynomial functions $f$ is a combination of \eqref{e:Widom_identity} and Proposition~\ref{P:HeattoInt}. If $f$ is a continuous function on $\R$ with $f(0)=0$, let $\varepsilon>0$ and choose a polynomial function $p$ with $p(0)=0$ such that 
    \[
        \|f-p\|_{L_{\infty}([-\|A\|_{\infty},\|A\|_{\infty}])}<\varepsilon.
    \]
    Then 
    \[
        \Big|\frac{\Tr((f-p)(P_{\lambda}AP_{\lambda}))}{\Tr(P_{\lambda})}\Big|< \varepsilon
    \]
    and
    \[
        |\Tr_{\omega}((f-p)(A)\langle D\rangle^{-d})| \leq \varepsilon\|\langle D\rangle^{-d}\|_{1,\infty}.
    \]
    Hence
    \[
        \limsup_{\lambda\to\infty} \Big|\Tr_\omega(\langle D\rangle^{-d})\frac{\Tr( f(P_{\lambda}AP_{\lambda}))}{\Tr(P_{\lambda})}-\Tr_{\omega}(f(A)\langle D\rangle^{-d})\Big| \leq 2\varepsilon \|\langle D\rangle^{-d}\|_{1,\infty}.
    \]
    Since $\varepsilon$ is arbitrary, this implies equation~\eqref{eq:Szego2}.
\end{proof}

We emphasise that Theorem~\ref{T:Szego} shows that the classical Szeg\H{o} theorems for determinants of Toeplitz matrices and Widom's generalisations thereof can be interpreted as properties of the noncommutative integral on spectral triples and their spectral truncations. 

\section{Fr\"{o}hlich functional}\label{S:Frohlich}
So far, we have considered situations modeled after $d$-dimensional spectral triples, where $\langle D\rangle^{-d} \in \mathcal{L}_{1,\infty}$. There are many examples of spectral triples that do not satisfy this condition, however. Instead, one could consider the property of $\theta$-summability, which says that $\Tr(e^{-tD^2}) < \infty$ for all $t>0$, or $\mathrm{Li}_1$-summability which requires $\Tr(e^{-t|D|}) < \infty$ for $t$ large enough.

For this section, we therefore assume that $D$ is a self-adjoint operator with compact resolvent, but we do not assume Weyl laws. Assuming $\mathrm{Li}_1$-summability, the functional
\[
a \mapsto \lim_{t\to \beta}\frac{\Tr(ae^{-t|D|})}{\Tr(e^{-t|D|})}, \quad a \in B(\Hc),
\]
which is sometimes called the Fr\"{o}hlich functional after \cite{FrohlichGrandjeanRecknagel1995,FrohlichChamseddineFelder1993}, has been studied extensively in the literature~\cite{GoffengMesland2018,GoffengRennie2019}. We highlight the relation between this functional and the one that has been the object of study in this note.

\begin{prop}
    Assume there exists $\beta \geq 0$ such that $\Tr(e^{-t|D|}) < \infty$ for $t > \beta$ and $\lim_{t \searrow \beta}\Tr(e^{-t|D|}) = \infty$. Then, for any extended limit $\omega \in \ell_\infty^*$ there exists an extended limit $\hat{\omega}_{D,\beta}\in \ell_\infty^*$ depending on $D$ and $\beta$ such that
    \begin{equation}\label{eq:FrohlichProp1}
    \omega \bigg(\frac{\Tr(ae^{- (\beta + \frac{1}{n}) |D|})}{\Tr(e^{-(\beta + \frac{1}{n})|D|})}  \bigg) = \hat{\omega}_{D,\beta} \bigg(\frac{\Tr(P_{\lambda_n} a P_{\lambda_n})}{\Tr(P_{\lambda_n})} \bigg), \quad a \in B(\Hc).
    \end{equation}
    Furthermore,
    \begin{equation}\label{eq:FrohlichProp2}
    \lim_{\lambda \to \infty} \frac{\Tr(P_\lambda a P_\lambda)}{\Tr(P_\lambda)} = \lim_{t\to \beta}\frac{\Tr(ae^{-t|D|})}{\Tr(e^{-t|D|})}, \quad a \in B(\Hc),
    \end{equation}
    in the sense that if the LHS limit exists, then the RHS limit exists and the equality holds.
\end{prop}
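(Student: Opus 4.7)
The plan is to exhibit $H(a)_n := \Tr(ae^{-t_n|D|})/\Tr(e^{-t_n|D|})$ as the image of the sequence $T(a)_k := \Tr(P_{\lambda_k}aP_{\lambda_k})/\Tr(P_{\lambda_k})$ under an explicit positive regular summation matrix $\Phi:\ell_\infty\to\ell_\infty$; once this is established, defining $\hat\omega_{D,\beta}:=\omega\circ\Phi$ yields both claims. To set up the matrix I would work in the eigenbasis $\{e_k\}$ of $|D|$ with eigenvalues $\lambda_0\leq\lambda_1\leq\cdots$ and apply Abel summation to
\[
\Tr(ae^{-t|D|})=\sum_k e^{-t\lambda_k}\langle e_k, ae_k\rangle = \sum_k G_k(e^{-t\lambda_k}-e^{-t\lambda_{k+1}}),
\]
where $G_k=\sum_{j\leq k}\langle e_j, ae_j\rangle$; the rearrangement is justified by $\Tr(e^{-t|D|})<\infty$ for $t>\beta$. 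Terms with $\lambda_k=\lambda_{k+1}$ vanish, and on the remaining indices $G_k=\Tr(P_{\lambda_k}aP_{\lambda_k})$ and $k+1=\Tr(P_{\lambda_k})$. Setting $a=1$ gives the same identity for the denominator, so
\[
H(a)_n = \sum_k p_k(t_n)\, T(a)_k,\qquad p_k(t):=\frac{(k+1)(e^{-t\lambda_k}-e^{-t\lambda_{k+1}})}{\Tr(e^{-t|D|})}\geq 0,
\]
with $\sum_k p_k(t_n)=1$.

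The key step is then verifying the Silverman--Toeplitz regularity conditions for $\Phi(x)_n:=\sum_k p_k(t_n)x_k$. Rows sum to $1$ and entries are nonnegative, so it remains to show $p_k(t_n)\to 0$ for each fixed $k$. For such $k$ the numerator $(k+1)(e^{-t\lambda_k}-e^{-t\lambda_{k+1}})$ is bounded on $t\in(\beta,\beta+1]$, while the standing hypothesis $\lim_{t\searrow\beta}\Tr(e^{-t|D|})=\infty$ forces the denominator to diverge; hence $p_k(t_n)\to 0$. Consequently $\Phi$ maps $c_0$ into $c_0$ and preserves limits of convergent sequences. Defining $\hat\omega_{D,\beta}:=\omega\circ\Phi$ produces a positive unital functional on $\ell_\infty$ that vanishes on $c_0$, i.e.\ an extended limit, and by construction $\hat\omega_{D,\beta}(T(a))=\omega(\Phi(T(a)))=\omega(H(a))$, which is the first claim.

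For the second statement, suppose $\lim_{\lambda\to\infty}\Tr(P_\lambda aP_\lambda)/\Tr(P_\lambda)=L$. Because this ratio is constant on each plateau of $|D|$, it equals $\lim_k T(a)_k=L$. Regularity of $\Phi$ (which goes through verbatim for any sequence $t_n\searrow\beta$, not just $\beta+1/n$) then gives $H(a)(t_n)=\sum_k p_k(t_n)T(a)_k\to L$; applying this to arbitrary sequences approaching $\beta$ shows $\lim_{t\to\beta}H(a)(t)=L$. The main technical obstacle is the regularity check, and in particular the pointwise decay $p_k(t_n)\to 0$; everything else reduces to Abel summation and the Silverman--Toeplitz theorem. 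A minor nuisance to dispatch cleanly is the possible eigenvalue degeneracy, but the vanishing of the weights $p_k(t)$ on degenerate indices ensures that replacing $Q_k$ by $P_{\lambda_k}$ in the formula is legitimate and that the reindexing against the paper's convention for $T(a)_k$ is consistent.
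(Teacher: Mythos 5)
Your proposal is correct and takes essentially the same route as the paper: Abel summation recasts the quotient $\Tr(ae^{-t|D|})/\Tr(e^{-t|D|})$ as a positive, row-stochastic summation method applied to the sequence $\Tr(P_{\lambda_k}aP_{\lambda_k})/\Tr(P_{\lambda_k})$, and regularity of that method (which the paper obtains by citing Hardy's Theorems III.2 and III.5 rather than checking the Silverman--Toeplitz conditions directly, the decisive condition in both cases being $\lim_{t\searrow\beta}\Tr(e^{-t|D|})=\infty$) yields both the extended limit $\hat\omega_{D,\beta}$ and the second assertion. The only cosmetic difference is that the paper groups terms by the distinct eigenvalues $r_k$, whereas you keep all indices and note that the weights on degenerate indices vanish.
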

\begin{proof}
Write $\{r_k\}_{k=0}^\infty$ for the eigenvalues of $|D|$ counted \textit{without} multiplicity so that $r_0 < r_1 < \cdots$. Observe the identity $r_k = \lambda_{N(r_k)}$ where $N(\lambda):= \# \{k : \lambda_k \leq \lambda \}$ is the spectral counting function of $|D|$ and $\{\lambda_n\}_{n=0}^\infty$ are the eigenvalues of $|D|$ counted \textit{with} multiplicity. Then,
\begin{align*}
    \frac{\Tr(ae^{- (\beta + \frac{1}{n}) |D|})}{\Tr(e^{-(\beta + \frac{1}{n})|D|})} &= \frac{1}{\Tr(e^{-(\beta + \frac{1}{n})|D|})} \sum_{k=0}^\infty \big(\sum_{\lambda_j = r_k} \langle e_j, a e_j \rangle\big)e^{-(\beta + \frac{1}{n})r_k}\\
    &= \frac{1}{\Tr(e^{-(\beta + \frac{1}{n})|D|})} \sum_{k=0}^\infty \bigg(\Tr(P_{r_k})\frac{\Tr(P_{r_k}a P_{r_k})}{\Tr(P_{r_k})}- \Tr(P_{r_{k-1}}) \frac{\Tr(P_{r_{k-1}} a P_{r_{k-1}})}{\Tr(P_{r_{k-1}})} \bigg)   e^{-(\beta + \frac{1}{n})r_k}.
\end{align*}
Hence if we define $\hat{\omega}_{D,\beta} \in (\ell_\infty)^*$ by
\[
\hat{\omega}_{D,\beta}(b) := \omega \bigg( \frac{1}{\Tr(e^{-(\beta + \frac{1}{n})|D|})} \sum_{k=0}^\infty \Big(\Tr(P_{r_k}) b_{N(r_k)}  - \Tr(P_{r_{k-1}}) b_{N(r_{k-1})} \Big)   e^{-(\beta + \frac{1}{n})r_k}  \bigg), \quad b \in \ell_\infty,
\]
we have by construction that 
\[
    \omega \bigg(\frac{\Tr(ae^{-(\beta +  \frac{1}{n}) |D|})}{\Tr(e^{-(\beta + \frac{1}{n})|D|})}  \bigg) = \hat{\omega}_{D,\beta} \bigg(\frac{\Tr(P_{\lambda_n} a P_{\lambda_n})}{\Tr(P_{\lambda_n})} \bigg), \quad a \in B(\Hc),
\]
which is equation~\eqref{eq:FrohlichProp1}. Crucially, $\hat{\omega}_{D, \beta}$ is an extended limit if and only if $\lim_{t \searrow \beta}\Tr(e^{-t|D|}) = \infty$, see~\cite[Theorem~III.2]{Hardy1949}.

Equation~\eqref{eq:FrohlichProp2} is proved through the continuous version of the cited theorem, namely~\cite[Theorem~III.5]{Hardy1949}. If the limit 
\[
    \lim_{\lambda \to \infty} \frac{\Tr(P_\lambda a P_\lambda)}{\Tr(P_\lambda)}
\]
exists, then all extended limits on the sequence $\frac{\Tr(P_{\lambda_n} a P_{\lambda_n})}{\Tr(P_{\lambda_n})}$ coincide, and so we conclude
\[
    \lim_{\lambda \to \infty} \frac{\Tr(P_\lambda a P_\lambda)}{\Tr(P_\lambda)} = \lim_{t\to \beta}\frac{\Tr(ae^{-t|D|})}{\Tr(e^{-t|D|})}, \quad a \in B(\Hc).\qedhere
\]
\end{proof}

Writing $P_D := \chi_{[0,\infty)}$ and applying the above results to $P_D D$ instead of $D$, we have that
\[
\omega \bigg(\frac{\Tr(P_D ae^{- (\beta + \frac{1}{n})D})}{\Tr(P_D e^{-(\beta + \frac{1}{n})D})}  \bigg)= \hat{\omega}_{D,\beta} \bigg(\frac{\Tr(\chi_{[0,\lambda_n]}(D) a \chi_{[0,\lambda_n]}(D))}{\Tr(\chi_{[0,\lambda_n]}(D))} \bigg), \quad a \in B(\Hc),
\]
which is a functional that is extensively studied in~\cite{GoffengRennie2019}. In particular, it defines a KMS state of inverse temperature $\beta$ on the Toeplitz algebra generated by a $\mathrm{Li}_1$-summable spectral triple $(\A, \Hc, D)$ satisfying some extra conditions.

\section{Density of States}\label{S:DOS}
Another lens through which to interpret the discussed results so far is that of the density of states (DOS). In a sense, this perspective is the `Fourier transform' of the picture in the previous sections. So far, we have thought of $D$ as a Dirac-type operator, and $A \in B(\Hc)$ as a multiplication operator. We will now flip this around, taking $D$ a multiplication operator on $L_2(X)$ for some metric measure space $X$, and $A = f(H)$ for a potentially unbounded operator $H$ on $L_2(X)$.

Originating in solid state physics, the DOS describes for a quantum system, roughly speaking, how many quantum states are admitted at each energy level per unit volume. Usually, this framework is applied to study electrons in a solid material. For reviews of the DOS in mathematical physics, we refer to e.g.~\cite{PasturFigotin1992,Veselic2008,AizenmanWarzel2015}. 

Following Simon~\cite[Section C]{Simon1982}, we define the DOS as follows. Given a (possibly unbounded) self-adjoint operator $H$ on the Hilbert space $L_2(X)$ where $X$ is some metric space with a Borel measure written as $|\cdot|$, we consider the limits
\begin{equation*}
\lim_{R\to \infty} \frac{1}{|B(x_0,R)|}\mathrm{Tr}(f(H)M_{\chi_{B(x_0,R)}}), \quad f\in C_c(\R),
\end{equation*}
where $B(x_0,R)$ denotes the closed ball with center $x_0 \in X$ and radius $R$. When these limits exist (this includes assuming that $f(H)M_{\chi_{B(x_0,R)}}$ is trace-class), the limit is a positive continuous linear functional on $C_c(\R)$ and hence, via the Riesz--Markov--Kakutani theorem, we obtain a Borel measure $\nu_H$ on $\R$~\cite[Proposition C.7.2]{Simon1982} such that
\begin{equation}\label{eq:DOS}
\lim_{R\to \infty} \frac{1}{|B(x_0,R)|}\mathrm{Tr}(f(H)M_{\chi_{B(x_0,R)}}) = \int_{\mathbb{R}}f \,d\nu_H, \quad f\in C_c(\mathbb{R}).
\end{equation}
The measure $\nu_H$, if it exists, is what we call the density of states of the operator $H.$  

The main result of this section concerns a Dixmier trace formula for the density of states (DOS) on discrete metric spaces, which gives a variant of the main result of~\cite{AzamovHekkelman2022}. 
This is an equality
\[
\Tr_\omega(f(H)M_w) = C \int_\R f\, d\nu_H, \quad f\in C_c(\R),
\]
where $w: X \to \C$ is a weight such that $M_w \in \mathcal{L}_{1,\infty}$ and $\omega \in \ell_\infty^*$ is an extended limit. In~\cite{AMSZ}, this formula was proven for $X = \R^n$ and $H = -\Delta+M_V$ a Schr\"odinger operator. As explained there, on $\R^n$ with $H = -\Delta$, this formula is nothing but the Fourier transform of Connes' integration formula applied to a radial function (Theorem~\ref{T:ConnesIntegration}). In following works~\cite{AzamovHekkelman2022, HekkelmanMcDonald2024}, the formula was extended to certain discrete spaces and certain manifolds of bounded geometry, respectively.

Let $(X,d_X)$ be an infinite metric space and let $x_0 \in X.$ We assume that all metric balls contain finitely many points. Let $w:X\to \C$ be defined by 
\[
w(x) = \frac{1}{|B(x_0,d_X(x,x_0))|}
\]
where in this case $|\cdot|$ is the cardinality.
Note that the spectral projections $P_\lambda = \chi_{[0,\lambda]}(M_w^{-1})$ of this operator are multiplication operators $M_{\chi_{B(x_0,R_\lambda)}}$ for certain corresponding $R_\lambda \in \R_{>0}.$
Let $\{r_k\}_{k=0}^\infty$ be an increasing enumeration of the set $\{d(x,x_0)\;:\; x\in X\}.$ That is, $\{r_k\}_{k=0}^\infty$ lists the set of distances of points from $x_0$ without taking into account multiplicities.

\begin{prop}\label{P:PropCWeyl}
    Let $X$ and $w$ be as above. For the spectral counting function $N(\lambda)= \Tr(\chi_{[-\lambda,\lambda]}(M_w^{-1}))$, we have
    \[
    N(\lambda) \sim \lambda, \quad \lambda \to \infty \iff \frac{|B(x_0,r_{k+1})|}{|B(x_0, r_k)|} \to 1, \quad k \to \infty.
    \]
\end{prop}
\begin{proof}
    $\Longrightarrow$: For $\lambda \geq 1$, we have 
    \begin{align*}
        N(\lambda) &= \# \{x \in X \; : \; |B(x_0, d_X(x,x_0))| \leq \lambda \}\\
        &= |B(x_0, r_{i_\lambda})|,
    \end{align*}
    where $r_{i_\lambda}$ is the largest element in $\{r_k\}_{k \in \N}$ such that $|B(x_0,r_k)| \leq \lambda.$

    In particular, $N(|B(x_0,r_k)|) = |B(x_0,r_k)|$, and $N(|B(x_0,r_k)|-\frac{1}{2}) = |B(x_0,r_{k-1})|$. Hence, if $N(\lambda) \sim \lambda$, it follows that
    \[
     \frac{|B(x_0,r_{k+1})|-\frac{1}{2}}{|B(x_0,r_{k})|} = \frac{|B(x_0,r_{k+1})|-\frac12}{N(|B(x_0,r_{k+1})|-\frac12)} \to 1, \quad k \to \infty.
    \]
    Hence, 
    \[
    \frac{|B(x_0,r_{k+1})|}{|B(x_0,r_k)|} \to 1, \quad k \to \infty.
    \]
    $\Longleftarrow$: As before, we have 
    \[
    N(\lambda)=|B(x_0,r_{i_\lambda})|,
    \]
    and hence
    \[
    N(\lambda)=|B(x_0, r_{i_\lambda})| \leq \lambda \leq  |B(x_0,r_{1+i_\lambda})|.
    \]
    Dividing these inequalities by $N(\lambda)$ gives
    \[
    1 \leq \frac{\lambda}{N(\lambda)} \leq \frac{|B(x_0,r_{1+i_\lambda})|}{|B(x_0,r_{i_\lambda})|}.
    \]
    If $\frac{|B(x_0, r_{k+1})|}{|B(x_0,r_{k})|} \to 1$, it follows that
     $N(\lambda) \sim \lambda$ as $\lambda\to \infty$.
\end{proof}

We observe that Proposition~\ref{P:PropCWeyl} is closely related to~\cite[Proposition~2.9]{CiprianiSauvageot2021}.

\begin{thm}\label{T:NewDOS}
    Let $(X,d_X)$ and $w:X\to\C$ be as before, and suppose that \begin{equation}\label{E: Condition}
        \lim_{k\rightarrow \infty}\frac{|B(x_0, r_{k+1})|}{|B(x_0, r_k)|} = 1.
    \end{equation}
    Then for every extended limit $\omega \in \ell_\infty^*$ and bounded operator $T \in B(\ell_2(X))$,
    \begin{equation} \label{E: general DOS formula}
        \Tr_\omega(TM_w) =\omega \circ M\bigg( \frac{\Tr(TM_{\chi_{B(x_0,r_k)}})}{|B(x_0,r_k)|}\bigg).
    \end{equation}
\end{thm}
\begin{proof}
We have that $M_w\in \mathcal{L}_{1,\infty}$ (see~\cite[Lemma~4.1]{AzamovHekkelman2022}), and hence the left-hand side of equation~\eqref{E: general DOS formula} is well-defined. 

By Proposition~\ref{P:PropCWeyl} and condition~\eqref{E: Condition},
\[
N(\lambda)= \Tr(\chi_{[0,\lambda)}(M_w^{-1}))\sim \lambda, \quad \lambda \to \infty.
\]
The eigenvalues (without multiplicity) of $M_w^{-1}$ are equal to $\{|B(x_0, r_k)|\}_{k \in \N}$, where the eigenvalue $|B(x_0,r_k)|$ has multiplicity $|B(x_0,r_k)|-|B(x_0,r_{k-1})|$. Write $\{\lambda_k\}_{k\in \N}$ for an eigenvalue sequence of $M_w^{-1}$ without multiplicities, in increasing order. Given $k \in \N$, there exists $n_k \in \N$ such that
\[
|B(x_0, r_{n_k})| < k \leq|B(x_0,r_{1+n_k})|.
\]
Then, $\lambda_k = |B(x_0, r_{1+n_k})|$, and hence
\[
1 \leq \frac{\lambda_k}{k} < \frac{|B(x_0, r_{1+n_k})|}{|B(x_0, r_{n_k})|}.
\]
Condition~\eqref{E: Condition} now implies that
\[
\lambda_k \sim k, \quad k \to \infty.
\]
Hence, $M_w^{-1}$ satisfies Weyl's law, and Theorem~\ref{T:Log-CesaroMean} gives that for any extended limit $\omega \in \ell_\infty^*$,
\[
\frac{\Tr_\omega(TM_w)}{\Tr_\omega(M_w)} =\omega \circ M\bigg( \frac{\Tr(TM_{\chi_{B(x_0,r_k)}})}{|B(x_0,r_k)|}\bigg).
\]
Finally, since we proved that $\lambda_k \sim k$, it also follows that $\Tr_\omega(M_w) = 1$ independently of the extended limit $\omega \in \ell_\infty^*$ (this was also shown in~\cite[Corollary~5.2.3]{HekkelmanPhD}).
\end{proof}

Theorem~\ref{T:NewDOS} is less general than the main result of~\cite{AzamovHekkelman2022}, since the latter provides a result for a much larger class of weights $w: X \to \C$. However, for this particular choice of $w$, Theorem~\ref{T:NewDOS} is a stronger result than that was achieved in~\cite{AzamovHekkelman2022}, since it does not assume the existence of the limits
\[
\lim_{k\to \infty} \frac{1}{|B(x_0,r_k)|}\Tr(TM_{\chi_{B(x_0,r_k)}}).
\]
Furthermore, from Theorem~\ref{T:NewDOS} we now see that $\Tr_\omega(TM_w)$
is independent of the extended limit $\omega \in \ell_\infty^*$ if and only if
\[
\lim_{k\to\infty}M\bigg(\frac{1}{|B(x_0, r_k)|} \Tr(TM_{\chi_{B(x_0,r_k)}}) \bigg)
\]
exists, which is novel.

The Szeg\H{o} limit theorem from Section~\ref{S:Szego} takes on an entirely different role in this setting. In light of Lemma~\ref{L:Widom}, we obtain the following form of Theorem~\ref{T:Szego}.

\begin{cor}\label{C:DOSes}
    Let $X$ and $w$ as before, and assume that
    \begin{equation*}
        \lim_{k\rightarrow \infty}\frac{|B(x_0, r_{k+1})|}{|B(x_0, r_k)|} = 1.
    \end{equation*}
    If for some $\alpha > 0$, $H$ maps $\dom (M_w^{-\alpha})$ into $\dom (M_w^{-\alpha})$ and $[M_w^{-\alpha}, H]$ extends to a bounded operator on $\ell_2(X)$, then we have for all $f\in C_c(\R)$ and extended limits $\omega \in \ell_\infty^*$,
    \begin{equation*}
        \omega \circ M\bigg( \frac{\Tr(f(H)M_{\chi_{B(x_0,r_k)}})}{|B(x_0,r_k)|}\bigg) = \omega \circ M \bigg(\frac{\Tr(f(H|_{B(x_0,R)})M_{\chi_{B(x_0,r_k)}})}{|B(x_0,r_k)|} \bigg) = \Tr_\omega(f(H)M_w) ,
    \end{equation*}
    where $H|_{B(x_0,R)}:= M_{\chi_{B(x_0,r_k)}}HM_{\chi_{B(x_0,r_k)}}$.
\end{cor}
We see that the `Szeg\H{o} limit theorem' is now a result connecting  two differing ways of defining the DOS. Namely, an alternative definition of the DOS than via equation~\eqref{eq:DOS} is that via the limits
\[
\lim_{R\to \infty} \frac{1}{|B(x_0,R)|}\mathrm{Tr}(f(H|_{B(x_0,R)})M_{\chi_{B(x_0,R)}}) = \int_{\mathbb{R}}f \,d\tilde{\nu}_H, \quad f\in C_c(\mathbb{R}),
\]
and Corollary~\ref{C:DOSes} gives conditions for a Dixmier trace formula to hold for both approaches. In particular, Corollary~\ref{C:DOSes} also gives that under the listed conditions for $H$, we have that
\[
\lim_{k\to\infty}M\bigg(\frac{1}{|B(x_0, r_k)|} \Tr(f(H)M_{\chi_{B(x_0,r_k)}}) \bigg) =
\lim_{k\to\infty}M\bigg(\frac{1}{|B(x_0, r_k)|} \Tr(f(H|_{B(x_0,R)})M_{\chi_{B(x_0,r_k)}}) \bigg),
\]
in the sense that if one limit exists, the other limit exists and they are equal.

The condition that $[M_w^{-\alpha},H]$ extends to a bounded operator for some $\alpha >0$ is satisfied in most common situations. 
\begin{ex}
    Consider $(\Z^d,d_{\ell_1})$ where $d_{\ell_1}$ is the distance induced by the $\ell_1$-norm (i.e. the graph distance). 
    Let $\{e_m\}_{m\in \Z^d}$ be the canonical orthonormal basis of $\ell_2(\Z^d)$. Define the discrete Laplacian by
    \[
    \Delta: e_m \mapsto \sum_{d_{\ell_1}(k,m)=1} (e_m-e_k).
    \]
    Then for any bounded real-valued potential $V:\Z^d\to\R$, the Schr\"odinger operator $H:=-\Delta + M_V$ maps $\dom(M_w^{-\frac{1}{d}})$ into $\dom(M_w^{-\frac{1}{d}})$, and $[H, M_w^{-\frac{1}{d}}]$ extends to a bounded operator. This is easily seen from the fact that $\Delta = \sum_{k=1}^d (2I -S_k-S_k^*)$, where $S_1, \ldots, S_d$ are the shift operators on $\ell_2(\Z^d)$, and $|B(0,k)|^{\frac{1}{d}} \sim  C_dk$.
\end{ex}

\section{Noncommutative ergodicity}\label{S:Ergodicity}
Quantum ergodicity began as a study of geodesic flow on manifolds through abstract operator theoretical language. On a closed Riemannian manifold $(M,g)$ we can define the geodesic flow as a map $G_t^M: SM \to SM$, where $SM$ is the unit sphere in the tangent bundle of the manifold $M$. For a point $(x,v) \in SM$, one simply takes the unique geodesic $\gamma: \R \to M$ with $\gamma(0) = x$ and $\gamma'(0) = v$, and defines $G_t^M(x,v):= (\gamma(t), \gamma'(t))$. This flow is said to be ergodic if every measurable function $f \in L_\infty(SM)$ which is fixed by the flow (i.e. $f \circ G_t^M = f$ almost everywhere) is constant almost everywhere. Equivalently, the geodesic flow can be defined on $S^*M$, the unit sphere in the cotangent bundle. 

Let $\{e_k\}_{k=0}^\infty$ be any orthonormal basis of eigenvectors of the Laplace--Beltrami operator $\Delta_g$, and let $P_\lambda := \chi_{[0,\lambda]}(\Delta_g)$. Related to the result derived in Section~\ref{S:Integration}, it is known~\cite[Section~4]{ColindeVerdiere1985} that for compact Riemannian manifolds we have that
\[
\frac{\Tr(P_\lambda \mathrm{Op}(a) P_\lambda)}{\Tr(P_\lambda)} \to \int_{S^*M}a \, d\nu,
\]
where $a \in C^\infty(S^*M)$ and $\mathrm{Op}(a)$ is a classical pseudodifferential operator with principal symbol~$a$.
Shnirelman, Zelditch, and Colin de Verdi\`ere showed~\cite{Shnirelman1974,ColindeVerdiere1985,Zelditch1987} that this fact can be used, if $M$ has ergodic geodesic flow, to show that there exists a density one subsequence $\{e_{j}\}_{j \in J}$ of $\{e_k\}_{k=0}^\infty$, meaning that $\frac{\#(J \cap \{0,\ldots, n\})}{n+1} \to 1$, such that
\[
\lim_{J\ni j\to\infty} \langle e_{j}, \mathrm{Op}(a) e_{j}\rangle =\int_{S^*M}a \, d\nu.
\]
This and related properties are called quantum ergodicity of the operator $\Delta_g$.

Before we start to put quantum ergodicity results into a noncommutative geometrical context, let us observe first that our labours in Section~\ref{S:Integration} provide a result in the other direction. The Weyl measure of an operator, which is the relevant measure for quantum ergodicity~\cite[Section~4]{ColindeVerdiereHillairet2018}, admits a Dixmier trace formula.

\begin{defn}\label{D:WeylMeasure}
    Let $M$ be a manifold equipped with a nonvanishing density $\rho$, and let $\Delta$ be a self-adjoint positive operator on $L_2(M,\rho)$ with compact resolvent. Let $\{e_k\}_{k=0}^\infty$ be an orthonormal basis of $L_2(M,\rho)$ consisting of eigenvectors of $\Delta$ with corresponding eigenvalues $\{\lambda_k\}_{k=0}^\infty$. If
\[
\lim_{\lambda \to \infty} \frac{1}{N(\lambda)} \sum_{\lambda_k \leq \lambda} \langle  e_k, M_f e_k \rangle
\]
exists for all $f \in C_c(M)$, then there exists a measure $\mu_\Delta$ such that
\[
\lim_{\lambda \to \infty} \frac{1}{N(\lambda)} \sum_{\lambda_k \leq \lambda} \langle  e_k, M_f e_k \rangle = \int_M f \, d\mu_\Delta.
\]
This measure is called the \textit{local Weyl measure} of $\Delta$. 
\end{defn}

\begin{prop}\label{P:DixTraceWeyl}
    If $\Delta$ as in Definition~\ref{D:WeylMeasure} satisfies Weyl's law
    \[
    \lambda(k, \Delta) \sim C k^\frac{2}{d}
    \]
    for some $0 < d \in \R$ and admits a local Weyl measure $\mu_\Delta$, then
    \begin{equation}\label{eq:DixTraceWeyl}
        \Tr_\omega(M_f (1+\Delta)^{-\frac{d}{2}}) =  \Tr_\omega((1+\Delta)^{-\frac{d}{2}}) \int_M f \, d\mu_\Delta.    
    \end{equation}
\end{prop}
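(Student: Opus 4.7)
The plan is to reduce the statement directly to Theorem~\ref{T:Log-CesaroMean}. Set $D:=\sqrt{\Delta}$, which is a positive self-adjoint operator with compact resolvent and satisfies $\langle D\rangle^{-d}=(1+\Delta)^{-d/2}$. The hypothesis $\lambda(k,\Delta)\sim Ck^{2/d}$ is precisely the condition that $D^{2}$ satisfies Weyl's law in the sense of Section~\ref{S:Integration}, and it guarantees $\langle D\rangle^{-d}\in\mathcal{L}_{1,\infty}$ with $\Tr_{\omega}(\langle D\rangle^{-d})>0$. Because $D$ is positive, $P_{\lambda}:=\chi_{[-\lambda,\lambda]}(D)=\chi_{[0,\lambda^{2}]}(\Delta)$, so the spectral projections of $D$ coincide with those of $\Delta$ under the reparametrization $\lambda\leftrightarrow\lambda^{2}$, and any orthonormal eigenbasis of $|D|=D$ is simultaneously an orthonormal eigenbasis of $\Delta$.

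With these identifications, applying Theorem~\ref{T:Log-CesaroMean} to the bounded operator $A=M_{f}$ (with $f\in C_c(M)$, say) yields
\[
\frac{\Tr_{\omega}(M_{f}(1+\Delta)^{-d/2})}{\Tr_{\omega}((1+\Delta)^{-d/2})}=(\omega\circ M)\bigg(\frac{\Tr(P_{\lambda_n}M_{f}P_{\lambda_n})}{\Tr(P_{\lambda_n})}\bigg).
\]
The inner expression equals $\tfrac{1}{N(\lambda_n)}\sum_{\lambda_k\le\lambda_n}\langle e_k,M_{f}e_k\rangle$, which is a subsampling of the continuous-parameter Cesàro average appearing in Definition~\ref{D:WeylMeasure} along a subsequence that hits every plateau of the piecewise-constant function $\lambda\mapsto \Tr(P_{\lambda}M_{f}P_{\lambda})/\Tr(P_{\lambda})$. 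By the existence of the local Weyl measure, this sequence converges to $\int_{M}f\,d\mu_{\Delta}$.

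Because the extended limit $\omega$ and the logarithmic mean $M$ are both regular summation methods (as noted immediately after Lemma~\ref{L:LogToCes}), their composition returns the ordinary limit on any convergent sequence. Hence the right-hand side above equals $\int_{M}f\,d\mu_{\Delta}$, and multiplying by $\Tr_{\omega}((1+\Delta)^{-d/2})$ produces the claimed identity~\eqref{eq:DixTraceWeyl}. There is no genuinely hard step: all the substance resides in Theorem~\ref{T:Log-CesaroMean}, and the only points requiring a brief verification are the dictionary between spectral data of $\Delta$ and of $D=\sqrt{\Delta}$, and the remark that continuous-parameter convergence of $\Tr(P_{\lambda}M_{f}P_{\lambda})/\Tr(P_{\lambda})$ is equivalent to convergence along $\{\lambda_n\}$ since this function is a step function in $\lambda$.
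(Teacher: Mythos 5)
Your proposal is correct and follows exactly the route the paper takes: the paper's proof is simply ``Consequence of Theorem~\ref{T:Log-CesaroMean},'' and you have supplied the intended details --- the dictionary $D=\sqrt{\Delta}$, the identification of $\Tr(P_{\lambda_n}M_fP_{\lambda_n})/\Tr(P_{\lambda_n})$ with the averages in Definition~\ref{D:WeylMeasure}, and the regularity of $\omega\circ M$ on convergent sequences. No gaps.
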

\begin{proof}
    Consequence of Theorem~\ref{T:Log-CesaroMean}.
\end{proof}

This is relevant for sub-Riemannian manifolds, in which case one can take $\Delta$ to be the sub-Laplacian and $\mu_\Delta$ is not necessarily the usual volume form on the manifold $M$. Notably, a rescaling of this measure was found very recently in~\cite[Section~1.4]{KordyukovSukochev2024} to be a spectrally correct sub-Riemannian volume of $M$, additionally providing in that context a generalisation of the above Dixmier trace formula to any normalised continuous trace $\phi$. This measure is studied extensively in this context in~\cite{ColindeVerdiereHillairet2018} as well.

We will now shift our attention to results in quantum ergodicity which are interesting when viewed from the perspective of noncommutative geometry. To start, we provide an analogue of ergodicity of the geodesic flow -- a property a compact Riemannian manifold can have, which we should therefore be able to see as a property of a spectral triple. For this purpose we recall the following construction and theorem by Connes~\cite[Section~6]{Connes1995}. For $A \in B(\Hc)$, we write $\sigma_t(A):= e^{it|D|}Ae^{-it|D|}$, $t \in \R$.

\begin{thm}\label{T:ConnesS*A}
    For a unital regular spectral triple $(\A, \Hc, D)$, where `regular' means that that $\delta^n(a) \in B(\Hc)$ for all $a \in \A$, $n \in \mathbb{N}$, define 
    \[
    S^*\A := C^*\bigg(\bigcup_{t\in \R}\sigma_t\big(\A\big) + K(\Hc)\bigg) \big/ K(\Hc).
    \]
    This $C^*$-algebra comes equipped with automorphisms 
    \[
    G_t(A + K(\Hc)):= e^{it|D|} A e^{-it|D|} + K(\Hc).
    \]
    For $(\A, \Hc, D) \simeq (C^\infty(M), L_2(S), D_M)$, the Dirac spectral triple associated to a compact Riemannian spin manifold, we have $S^*\A \simeq C(S^*M)$. 
    Furthermore, Egorov's theorem implies that the action of $G_t$ on $C(S^*M)$ is given by the geodesic flow $G_t^M$, see e.g~\cite[Section~9.2]{Zelditch2017}\cite[Section~11.1]{Zworski2012}.
\end{thm}

The fact that $G_t$ is given by the geodesic flow $G_t^M$ in the commutative setting, provides the basis for interpreting $G_t$ as an analogue of geodesic flow even in the noncommutative case.
A few examples of the construction $S^*\A$ are given in~\cite{GolseLeichtnam1998}. In the context of foliations of manifolds, it has been covered in~\cite{Kordyukov2005}.

\begin{rem}
    In the original formulation of Theorem~\ref{T:ConnesS*A} in~\cite{Connes1995}, $S^*\A$ was instead constructed as the space
    \[
    S^*\A := C^*\bigg(\bigcup_{t\in \R}\sigma_t\big(\Psi^0\big) + K(\Hc)\bigg) \big/ K(\Hc),
    \]
    where for a spectral triple $(\A, \Hc, D)$ Connes writes $\Psi^0$ for the set of operators admitting an asymptotic expansion
    \[
    P = b_0 + b_{-1}\langle D\rangle^{-1} + b_{-2}  \langle D\rangle^{-2} + \cdots, \quad b_j \in \mathcal{B},
    \]
    with $\mathcal{B}$ generated by $\A$ and $\delta^n(\A)$, where $\delta(a):= [|D|,a]$. The asymptotic expansion means that the difference between $P$ and the $n$th partial summand extends to a bounded linear operator from $\dom(\langle D\rangle^{s})$ to $\dom (\langle D\rangle^{s+n})$ for every $s\in \mathbb{R}.$
    Note that the operators $[D,\A]$ are not included in~$\mathcal{B}$. 

    For a unital spectral triple, $\langle D\rangle^{-1}$ is compact. And since for $b\in \mB,$ the second commutator $[|D|,[|D|,b]]$ is bounded, we have norm convergence
    \[
    \lim_{t\to 0} \frac{\sigma_t(b)-b}{t} = i[|D|,b],
    \]
    and hence Connes' original construction and the one in Theorem~\ref{T:ConnesS*A} (also used in~\cite{GolseLeichtnam1998}) are the same. 

    Note that it is important that the operators $[D,\A]$ are not included in $\Psi^0$. For illustration, in the commutative case $|D|$ acts with scalar principal symbol on the vector bundle $S$, meaning that $\mathcal{B}$ and hence $\Psi^0$ can be regarded as acting on $L_2(M)$ instead of $L_2(S)$. The isomorphism $S^*C^\infty(M) \cong C(S^*M)$ in Theorem~\ref{T:ConnesS*A} is then simply an extension of the symbol map $\Psi^0_{cl} \to C^\infty(S^*M)$ on classical pseudodifferential operators on $M$.
\end{rem}

The automorphisms $G_t$ provide an action of $\R$ on the $C^*$-algebra $S^*\A$, and this noncommutative cotangent sphere is thus an example of a $C^*$-dynamical system.

\begin{defn}
    A $C^*$-dynamical system $(\A, G, \alpha)$ consists of a $C^*$-algebra $\A$, a locally compact group $G$, and a strongly continuous representation $\alpha: G \to \operatorname{Aut}(\A)$. 
\end{defn}

There is a vast literature on $C^*$-dynamical systems, see~\cite[Section~2.7]{BratteliRobinson1987} for a start. In particular it has been a popular object of study in the field of quantum ergodicity, see e.g.~\cite{Zelditch1996}.

Recall that for compact manifolds, geodesic flow is said to be ergodic if the only measurable functions that are invariant almost everywhere under the geodesic flow are the functions that are constant almost everywhere. This definition is measure-theoretic in nature, and to translate it into a statement on spectral triples we therefore define the noncommutative $L_2$-space on $S^*\A$, which corresponds with $L_2(S^*M)$ in the commutative case.

\begin{prop}
    Let $(\A, \Hc, D)$ be a unital regular spectral triple where $D^2$ satisfies Weyl's law (Definition~\ref{D:WeylLaw}). The functional
    \[
    \tau(A + K(\Hc)):= \frac{\Tr_\omega(A\langle D\rangle^{-d})}{\Tr_\omega(\langle D \rangle^{-d})}, 
    \]
    defines a finite positive trace on $S^*\A$.
\end{prop}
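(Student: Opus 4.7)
The plan is to verify that $\tau$ is well-defined on the quotient by $K(\Hc)$, is bounded (hence finite), is positive, and satisfies the trace property. Finiteness and boundedness are immediate from the standard bound $|\Tr_\omega(A\langle D\rangle^{-d})|\le \|A\|_\infty\Tr_\omega(\langle D\rangle^{-d})$, which gives $|\tau([A])|\le\|A\|_\infty$ and $\tau([1])=1$. For well-definedness, I need $\Tr_\omega(K\langle D\rangle^{-d})=0$ for every compact $K$: the Ky Fan singular value inequality yields $\mu(2k, K\langle D\rangle^{-d}) \le \mu(k,K)\mu(k,\langle D\rangle^{-d})=o(1)\cdot O(k^{-1})=o(k^{-1})$, so the partial sums of $\mu(k, K\langle D\rangle^{-d})$ are $o(\log n)$ and every Dixmier trace vanishes. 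Positivity follows by cyclicity of $\Tr_\omega$ against bounded operators: $\Tr_\omega(A^*A\langle D\rangle^{-d})=\Tr_\omega(\langle D\rangle^{-d/2}A^*A\langle D\rangle^{-d/2})\ge 0$.

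The substantive step is the trace property $\tau([AB])=\tau([BA])$. I would first establish it on the $*$-subalgebra generated (modulo compacts) by $\bigcup_t\sigma_t(\Psi^0)$, which is norm-dense in $S^*\A$. By regularity of $(\A,\Hc,D)$, every $b\in\mathcal{B}$ and more generally every $A\in\Psi^0$ has $[|D|,A]$ bounded, and this property is preserved by $\sigma_t$ since $[|D|,\sigma_t(A)]=\sigma_t([|D|,A])$; the Leibniz rule for $\delta$ gives stability under sums and products. For such $A,B$, Theorem~\ref{T:Log-CesaroMean} rewrites
\[
\tau([AB])=(\omega\circ M)\bigg(\frac{\Tr(P_{\lambda_n}ABP_{\lambda_n})}{\Tr(P_{\lambda_n})}\bigg),
\]
and splitting $AB = AP_{\lambda_n}B + A(1-P_{\lambda_n})B$ gives
\[
\Tr(P_{\lambda_n}ABP_{\lambda_n}) = \Tr(P_{\lambda_n}AP_{\lambda_n}BP_{\lambda_n}) + \Tr(P_{\lambda_n}A(1-P_{\lambda_n})BP_{\lambda_n}).
\]
Lemma~\ref{L:Widom} discards the second term asymptotically, while the first is a finite-rank trace and hence cyclic: $\Tr(P_{\lambda_n}AP_{\lambda_n}BP_{\lambda_n})=\Tr(P_{\lambda_n}BP_{\lambda_n}AP_{\lambda_n})$. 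Running the same argument for $\tau([BA])$ produces the same limit, establishing the trace property on the dense subalgebra; continuity of $\tau$ in the operator norm then extends it to all of $S^*\A$.

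The main obstacle I anticipate is the bookkeeping around the dense subalgebra: one must check that the chosen representatives are stable under the algebraic operations while preserving boundedness of $[|D|,\cdot]$, so that Lemma~\ref{L:Widom} applies at every step, and that the quotient map does not disturb the argument (this is ensured by the well-definedness step, since adding a compact perturbation leaves $\tau$ invariant). No deeper obstruction appears, because the essential cancellation --- that $\tau$ is ``local to the truncation" --- is exactly the content of Theorem~\ref{T:Log-CesaroMean} combined with Lemma~\ref{L:Widom}.
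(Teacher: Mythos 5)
Your proposal is correct and follows essentially the same route the paper indicates: traciality via Theorem~\ref{T:Log-CesaroMean}, Widom's Lemma~\ref{L:Widom} to discard the cross term $\Tr(P_\lambda A(1-P_\lambda)BP_\lambda)$, and cyclicity of the finite-rank trace $\Tr(P_\lambda AP_\lambda BP_\lambda)=\Tr(P_\lambda BP_\lambda AP_\lambda)$. The remaining verifications (well-definedness modulo compacts, positivity, finiteness), which the paper dismisses as well-known, are also handled correctly.
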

\begin{proof}   
This is a standard result, see e.g.~\cite[Theorem~6.1]{CiprianiSauvageot2021} for a general formulation. We remark that the traciality of $\tau$ in fact follows from Theorem~\ref{T:Log-CesaroMean}, Widom's Lemma~\ref{L:Widom}, and the trivial identity
\[
\Tr(P_\lambda A P_\lambda B P_\lambda) = \Tr(P_\lambda B P_\lambda A P_\lambda),
\]
which is a novel proof of this fact.
\end{proof}

\begin{defn}
    We define $L_2(S^*\A)$ as the Hilbert space $\Hc_\tau$ in the GNS construction $(\pi_\tau, \Hc_\tau)$. Explicitly, writing $I= \{A + K(\Hc) \in S^*\A : \tau(A^*A) = 0 \}$, we define
    \[
    L_2(S^*\A) := \overline{S^*\A / I}^{\|\cdot\|_{L_2}},
    \]
    where the completion is taken in the semi-norm $\| A + I \|_{L_2} = \big(\tau(A^*A)\big)^{\frac{1}{2}}$. The space $L_2(S^*\A)$ is a Hilbert space with inner product defined via
    \[
    \langle A + I, B+I \rangle_{L_2} := \tau(B^*A), \quad A,B \in S^*\A.
    \]
    \end{defn}
Observe that the automorphism $G_t$ on $S^*\A$ extends to a unitary operator $G_t \in B(L_2(S^*\A))$.

\begin{nota}
    In accordance with the paradigm called the $C^*$-algebraic approach to the principal symbol~\cite{Cordes1979, Dao1, Dao2, Dao3}, we write
    \[
     \sym :C^*\bigg(\bigcup_{t\in \R}\sigma_t\big(\A\big) + K(\Hc)\bigg) \to S^*\A
    \]
    for the defining quotient map of $S^*\A$ (Theorem~\ref{T:ConnesS*A}), which is understood as a symbol map. Writing $\pi$ for the quotient map
    \begin{align*}
        \pi: S^*\A &\to L_2(S^*\A)\\
        A &\mapsto A+ I',
    \end{align*}
    we will furthermore use the notation 
    \[
    \syml := \pi \circ \sym :C^*\bigg(\bigcup_{t\in \R}\sigma_t\big(\A\big) + K(\Hc)\bigg) \to L_2(S^*\A).
    \]
\end{nota}

\begin{ex}\label{ex:L2SAspaces}
    \begin{enumerate}
        \item \label{ex:commutativel2original} For the Dirac spectral triple coming from a compact Riemannian spin manifold, \\ $(C^\infty(M), L_2(S), D_M)$, we have that  $S^*C^\infty(M) \simeq C(S^*M)$ with $\tau_{S^*\A} = \int_{S^*M}$. Hence $L_2(S^*C^\infty(M)) \simeq L_2(S^*M)$. The action $G_t$ agrees with the usual geodesic flow.
        \item \label{Ex:AlmostComm} Given an even dimensional compact Riemannian manifold, and a finite dimensional spectral triple $(\A_F, \Hc_F, D_F)$, we have for the almost commutative manifold (see Section~\ref{S:Preliminaries}) 
        \[
        (\A := C^\infty(M) \otimes \A_F, L_2(S)\otimes \Hc_F, D:=D_M \otimes 1 + \gamma_M \otimes D_F),
        \]
        that 
        $S^*\A \simeq C(S^*M)\otimes \A_F$ with $\tau_{S^*\A} = \int_{S^*M} \otimes \Tr$. Hence $L_2(S^*\A) \simeq L_2(S^*M)\otimes HS_F$, where $HS_F$ is simply $\A_F$ equipped with the Hilbert--Schmidt norm. The automorphisms $G_t$ act as $G_t^M \otimes 1$, where $G_t^M$ is the usual geodesic flow on $S^*M$. This corrects~\cite[Lemma~2.2]{GolseLeichtnam1998}.
        \item For the noncommutative torus $(C^\infty(\mathbb{T}^d_\theta), L_2(\mathbb{T}^d_\theta) \otimes \mathbb{C}^{N_d}, D)$ (see Section~\ref{S:Preliminaries}), we have that $S^*C(\mathbb{T}_\theta^d) \simeq C(\mathbb{T}^d_\theta) \otimes C(\mathbb{S}^{d-1})$ with $\tau_{S^*\mathbb{T}^d_\theta} = \tau_{\mathbb{T}^d_\theta} \otimes \int_{\mathbb{S}^{d-1}}$ and $\otimes$ is the minimal $C^*$-tensor product. Hence $L_2(S^* \mathbb{T}^d_\theta) \simeq  L_2(\mathbb{T}^d_\theta) \otimes L_2(\mathbb{S}^{d-1})$. The automorphisms $G_t$ act as
        \[
        G_t(u_{n} \otimes g) = u_n \otimes e_{t,n}g, \quad t\in\R, n \in \mathbb{Z}^d, g \in C(\mathbb{S}^{d-1}),
        \]
        where
        \[
        e_{t,n}(x) := \exp(it\, n \cdot x) , \quad t\in \R,n \in \mathbb{Z}^d, x\in \mathbb{S}^{d-1}\subseteq \mathbb{R}^d.
        \]
        \item Let $\A$ be the Toeplitz algebra, i.e. the $C^*$-algebra generated by the shift operator on $\ell_2(\mathbb{N})$, and let $D$ be the operator on $\ell_2(\mathbb{N})$ defined on the standard basis $\{ e_j \}_{j \in \mathbb{N}}$
        \[
        D: e_j \mapsto j e_j, \quad j \in \mathbb{N}.
        \]
        For the spectral triple $(\A, \ell_2(\mathbb{N}), D)$, we have $S^*\A \simeq C(\mathbb{S}^1)$ with $\tau_{S^*\A} = \int_{\mathbb{S}^1}$. Hence $L_2(S^*\A) \simeq L_2(\mathbb{S}^1)$. The automorphism $G_t$ is given by rotation. 
        \end{enumerate}
\end{ex}
\begin{proof}
    $(1)$ can be found in~\cite[Proposition~2]{Connes1995}.\\
    $(2)$: Since $|D| = \sqrt{D_M^2\otimes 1 + 1 \otimes D_F^2}$, it follows that $|D| - |D_M|\otimes 1$ is a compact operator on $L_2(S) \otimes \Hc_F$. We will show this with a double operator integral argument. First, one can omit the kernels of $|D|$ and $|D_M|\otimes 1$ from the Hilbert space as the projection onto the kernel of either operator is finite-rank and thus compact. Both operators have compact resolvent. Hence, after this modification, the function $f(x) = \sqrt{x}$ is smooth on a neighbourhood of the spectra of the operators  $|D|$ and $|D_M|\otimes 1$. Define the Sobolev spaces $\Hc^s := \dom |D_M|^s \otimes 1$, $s \in \R$, and apply~\cite[Theorem~6]{MOOIs} to find that
\[
|D| - |D_M|\otimes 1 = T^{D^2, D_M^2 \otimes 1}_{f^{[1]}}(1 \otimes D_F^2) \in K(\Hc),
\]
where $T^{D^2, D_M^2 \otimes 1}_{f^{[1]}}(1 \otimes D_F^2)$ is a double operator integral. Its compactness is a consequence of the fact that this multiple operator integral is a negative order pseudodifferential operator --- to be precise, it is an element of $ \op^{-1+\varepsilon}(|D_M|\otimes 1)$ for any $\varepsilon > 0$ in the notation of~\cite{MOOIs}.
It now follows from Duhamel's formula that
\[
e^{it|D|} - e^{it(|D_M|\otimes 1)} = it \int_0^1 e^{ist|D|} (|D| - |D_M| \otimes 1) e^{i(1-s)t(|D_M|\otimes 1)} \, ds  \in K(\Hc).
\]
Therefore,
\[
\bigcup_{t\in \R}\sigma_t(\mathcal{B}) + K(\Hc) = \bigcup_{t\in \R}\sigma^M_t(\mathcal{B}_M) \otimes \A_F + K(\Hc),
\]
where $\sigma_t^M$ is the conjugation with $e^{it|D_M|}$ on $M$ and $\mB_M$ the algebra generated by $\delta_{|D_M|}^n(C^\infty(M)) \subseteq B(L_2(M))$. We conclude that $S^*\A \simeq C(S^*M)\otimes \A_F$. This proof shows that the action $G_t$ on $ C(S^*M)\otimes \A_F  $ is given by $G_t = G_t^M \otimes 1$.
      \\   
    $(3)$: Although the Hilbert space of the spectral triple is $L_2(\mathbb{T}^d_\theta) \otimes \mathbb{C}^{N_d}$, since $|D| = \sqrt{-\Delta} \otimes 1$ (see Section~\ref{S:Preliminaries}) similarly to the manifold case $\mB$ acts trivially on the $\mathbb{C}^{N_d}$-component. We can therefore make the identification $\mB \subseteq B(L_2(\mathbb{T}^d_\theta))$. In fact, we claim that $\overline{\mB}^{\|\cdot\|}$ is a $C^*$-algebra stable under the action $\sigma_t(\cdot) = e^{it\sqrt{-\Delta}} (\cdot) e^{-it\sqrt{-\Delta}}$, and therefore
    \begin{equation}\label{eq:torusS*A1}
    S^*C(\mathbb{T}^d_\theta) \simeq (\overline{\mB}^{\|\cdot\|} + K(L_2(\mathbb{T}^d_\theta)))/K(L_2(\mathbb{T}^d_\theta)).
    \end{equation}
    The claim holds since formally $\sigma_t(a) = \sum_{k=0}^\infty \frac{(it)^k}{k!} \delta^k(a)$, and this sum is actually norm convergent for     
    $a \in \mathrm{Poly}(\mathbb{T}^d_\theta) := \mathrm{span}\{u_n\}_{n\in \mathbb{Z}^d}$. Denoting the generated $*$-algebra $\mB_{poly}:= \langle a, \delta^n(a)\rangle_{a \in \mathrm{Poly}(\mathbb{T}^d_\theta)}$ we therefore have
    \[
    \sigma_t : \mB_{poly} \to \overline{\mB}^{\|\cdot\|}.
    \]
    Since $ \mathrm{Poly}(\mathbb{T}^d_\theta)$ is dense in $C^\infty(\mathbb{T}^d_\theta)$ and $\sigma_t$ is an isometry on $B(L_2(\mathbb{T}^d_\theta))$, it is easily seen that this implies that $\sigma_t$ maps $\overline{\mB}^{\|\cdot\|}$ into itself, proving~\eqref{eq:torusS*A1}.
    
    By construction $C(\mathbb{T}^d_\theta)$ is represented on $L_2(\mathbb{T}^d_\theta)$ as bounded left-multiplication operators (denote the representation $\pi_1$), and $C(\mathbb{S}^{d-1})$ is as well via the representation
    \[
    \pi_2(g) = g\big(\frac{D_1}{\sqrt{-\Delta}}, \ldots, \frac{D_d}{\sqrt{-\Delta}} \big),\quad g \in C(\mathbb{S}^{d-1}),
    \]
    where $D_i: u_k \mapsto k_i u_k$. It is shown in~\cite{Dao2} that, writing $\Pi(C(\mathbb{T}^{d}_\theta), C(\mathbb{S}^{d-1}) )$ for the $C^*$ -algebra generated by $\pi_1( C(\mathbb{T}^{d}_\theta))$ and $\pi_2( C(\mathbb{S}^{d-1}))$ inside $B(L_2(\mathbb{T}^d_\theta))$, we have
    \begin{equation}\label{eq:DaoeqTorus}
    \Pi(C(\mathbb{T}^{d}_\theta), C(\mathbb{S}^{d-1}) ) / K(L_2(\mathbb{T}^d_\theta)) \simeq C(\mathbb{T}^{d}_\theta) \otimes C(\mathbb{S}^{d-1}).
    \end{equation}
    Comparing~\eqref{eq:torusS*A1} and~\eqref{eq:DaoeqTorus}, to determine that $S^*\mathbb{T}^d_\theta \simeq C(\mathbb{T}^d_\theta) \otimes C(\mathbb{S}^{d-1})$, it therefore suffices to show that 
    \begin{equation}\label{eq:TorusEquality}
    \overline{\mB}^{\|\cdot\|} + K(L_2(\mathbb{T}^d_\theta))= \Pi(C(\mathbb{T}^{d}_\theta), C(\mathbb{S}^{d-1}) ) + K(L_2(\mathbb{T}^d_\theta)) \subseteq B(L_2(\mathbb{T}^d_\theta)).
    \end{equation}
    To start, it is immediately obvious that $\pi_1(C(\mathbb{T}^{d}_\theta)) \subseteq \overline{\mB}^{\|\cdot\|}$. Next, the operators $\frac{D_i}{\sqrt{-\Delta}}$ generate $\pi_2(C(\mathbb{S}^{d-1}))$ as a $C^*$-algebra, and we claim that
    \begin{equation}\label{eq:ClaimTorus}
    u_{e_j}^*[\sqrt{-\Delta}, u_{e_j}] - \frac{D_j}{\sqrt{-\Delta}} \in K(L_2(\mathbb{T}^d_\theta)),
    \end{equation}
    where $e_j \in \mathbb{Z}^d$ is the standard unit vector.
    This would imply
    \begin{equation}\label{eq:TorusInclusion1}
     \Pi(C(\mathbb{T}^{d}_\theta), C(\mathbb{S}^{d-1}) ) + K(L_2(\mathbb{T}^d_\theta)) \subseteq \overline{\mB}^{\|\cdot\|} + K(L_2(\mathbb{T}^d_\theta)).
    \end{equation}
    Equation~\eqref{eq:ClaimTorus} is proven by writing 
    \begin{align*}
        \bigg(u_{e_j}^*[\sqrt{-\Delta}, u_{e_j}] - \frac{D_j}{\sqrt{-\Delta}}\bigg)u_k &= \bigg(|k+e_j| - |k| - \frac{k_j}{|k|} \bigg) u_{k}.        
    \end{align*}
    Now define 
    \[
    f(t, k):= |k+ t e_j|, \quad k\in \mathbb{Z}^d, t \in \R,
    \]
    and note that its derivatives in the $t$ variable are
    \[
    f'(t,k) = \frac{k_j+t}{|k+te_j|}, \quad f''(t,k) = \frac{|k+te_j|^2 - (k_j+t)^2}{|k+te_j|^3}.
    \]
    Hence 
    \begin{align*}
        \bigg(u_{e_j}^*[\sqrt{-\Delta}, u_{e_j}] - \frac{D_j}{\sqrt{-\Delta}}\bigg)u_k &= (f(1,k) - f(0,k) - f'(0,k))u_k\\
        &= \int_{0}^1 (1-t) f''(t, k) \, dt \cdot u_k.
    \end{align*}
    From the form of $f''(t,k)$ above, we therefore have
    \[
    |f(1,k) - f(0,k) - f'(0,k)| \in c_0(\mathbb{Z}^d),
    \]
    which indeed shows that $u_{e_j}^*[\sqrt{-\Delta}, u_{e_j}] - \frac{D_j}{\sqrt{-\Delta}}$ is a compact operator, proving~\eqref{eq:ClaimTorus} and therefore~\eqref{eq:TorusInclusion1}.

    For the other inclusion,
    \begin{equation}\label{eq:TorusInclusion2}
      \overline{\mB}^{\|\cdot\|} + K(L_2(\mathbb{T}^d_\theta)) \subseteq \Pi(C(\mathbb{T}^{d}_\theta), C(\mathbb{S}^{d-1}) ) + K(L_2(\mathbb{T}^d_\theta)),
    \end{equation}
    the above arguments already show that
    \[
    [\sqrt{-\Delta}, u_{e_j}] \in \Pi(C(\mathbb{T}^{d}_\theta), C(\mathbb{S}^{d-1}) ) + K(L_2(\mathbb{T}^d_\theta)).
    \]
    Since by explicit computation
    \[
    u_{e_j}^*\delta_{\sqrt{-\Delta}}^n(u_{e_j}) = (u_{e_j}^*[\sqrt{-\Delta}, u_{e_j}])^n, 
    \]
    and since $u_{e_j}^m = u_{m e_j}$, we have that
    \[
    \delta^n_{\sqrt{-\Delta}}(u_k) \in \Pi(C(\mathbb{T}^{d}_\theta), C(\mathbb{S}^{d-1}) ) + K(L_2(\mathbb{T}^d_\theta)), \quad n \in \mathbb{Z}_{\geq 0}, k \in \mathbb{Z}^d,
    \]
    and hence proving~\eqref{eq:TorusInclusion2}. The two inclusions~\eqref{eq:TorusInclusion1} and~\eqref{eq:TorusInclusion2} give the equality~\eqref{eq:TorusEquality}, which was noted to imply that
    $S^*\mathbb{T}^d_\theta \simeq C(\mathbb{T}^d_\theta) \otimes C(\mathbb{S}^{d-1})$. For the automorphism $G_t$, first note that
    \[
    \sigma_t(g(\frac{D_1}{\sqrt{-\Delta}}, \ldots, \frac{D_d}{\sqrt{-\Delta}})) = g(\frac{D_1}{\sqrt{-\Delta}}, \ldots, \frac{D_d}{\sqrt{-\Delta}}), \quad g\in C(\mathbb{S}^{d-1}).
    \]
    Next,
    \begin{align*}
        e^{it\sqrt{-\Delta}} u_{e_j}e^{-it\sqrt{-\Delta}} u_k &= e^{it(|k+e_j| - |k|)} u_{e_j} u_k\\
        &=u_{e_j} \exp(it u_{e_j}^* [\sqrt{-\Delta},u_{e_j}])  u_k\\
        &=u_{e_j}\exp\big(it \frac{D_j}{\sqrt{-\Delta}}\big) u_k +u_{e_j} \bigg(\exp(it u_{e_j}^* [\sqrt{-\Delta},u_{e_j}])  - \exp\big(it \frac{D_j}{\sqrt{-\Delta}}\big) \bigg) u_k.
    \end{align*}
    We have already seen that $  u_{e_j}^* [\sqrt{-\Delta},u_{e_j}]  -  \frac{D_j}{\sqrt{-\Delta}} \in K(L_2(\mathbb{T}^d_\theta))$, and hence as in the proof of Example~\ref{ex:L2SAspaces}.\ref{Ex:AlmostComm} it follows from Duhamel's formula that
    \begin{align*}
    \exp(it u_{e_j}^* [\sqrt{-\Delta},u_{e_j}]) - \exp\big(it \frac{D_j}{\sqrt{-\Delta}}\big) 
    \in K(L_2(\mathbb{T}^d_\theta)).
    \end{align*}
    Thus, we see that
    \[
        G_t(u_{n} \otimes g) = u_n \otimes e_{t,n}g, \quad t\in\R, n \in \mathbb{Z}^d, x\in \mathbb{S}^{d-1}\subseteq \mathbb{R}^d, g \in C(\mathbb{S}^{d-1}),
    \]
    where
    \[
        e_{t,n}(x) := \exp(it\, n \cdot x) , \quad t\in \R,n \in \mathbb{Z}^d, x\in \mathbb{S}^{d-1}\subseteq \mathbb{R}^d.
    \]
    $(4)$: It is well-known that, after identifying $\ell_2(\mathbb{N})$ with the Hardy space $H^2$, any element in the Toeplitz algebra $\A$ can be written as $T_\phi + K$, where $T_\phi$ is the Toeplitz operator with symbol $\phi \in C(\mathbb{S}^1)$ and $K \in K(\ell_2(\mathbb{N}))$, see e.g.~\cite[Section~3.5]{Murphy1990}. By an explicit computation, it can be seen that
    \[
    e^{it|D|} T_\phi e^{-it|D|} = T_{\phi \circ R_t},
    \]
    where $R_t$ is rotation by the angle $t$. Hence $\sigma_t(\A) = \A$, and 
    \[
    S^*\A = \A / K(\ell_2(\mathbb{N})) \simeq C(\mathbb{S}^1).
    \]
    For the noncommutative integral, we can use the diagonal formula in Theorem~\ref{T:Log-CesaroMean}, so that for an arbitrary element $T_\phi + K \in \A$,
    \[
    \Tr_\omega((T_\phi + K)\langle D\rangle^{-1}) = \omega \circ M (\langle e_k, (T_\phi + K) e_k \rangle ) = \int_{\mathbb{S}^1} \phi(t) \, dt. \qedhere
    \]
\end{proof}

\begin{defn}\label{def:ClassicErgodicity}
    We say that $(\A, \Hc, D)$ is classically ergodic if for $a \in L_2(S^*\A)$, we have $G_t(a) =a$ for all $t\in \R$ if and only if $a = \lambda \cdot 1 \in L_2(S^*\A)$ for some $\lambda \in \C$.
\end{defn}

The construction of $L_2(S^*\A)$ has now reached its goal; for spectral triples derived from compact Riemannian manifolds, this definition is precisely the usual definition of ergodicity of the geodesic flow (Example~\ref{ex:L2SAspaces2}.\ref{ex:commutativel2}).

We now immediately claim the following theorem, the NCG analogue of the classic result in quantum ergodicity by Shnirelman, Zelditch, and Colin de Verdi\`ere~\cite{Shnirelman1974, ColindeVerdiere1985, Zelditch1987}.
\begin{thm}\label{T:MainQE}
    Let  $(\A, \Hc, D)$ be a unital regular spectral triple with local Weyl laws. Assume that the closure of $\A$ in $B(\Hc)$ is separable. If the triple is classically ergodic, then for every basis $\{e_n\}_{n=0}^\infty$ of eigenvectors of $|D|$ there exists a density one subset $J \subseteq \mathbb{N}$ such that
    \[
    \lim_{J\ni j\to \infty}\langle e_j, A e_j \rangle = \frac{\Tr_{\omega}(A \langle D\rangle^{-d})}{\Tr_{\omega}(\langle D\rangle^{-d})}, \quad A \in \A.
    \]
\end{thm}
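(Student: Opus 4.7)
The plan is to transplant the classical Shnirelman--Zelditch--Colin de Verdi\`ere quantum ergodicity argument into the NCG setting, with Theorem~\ref{T:Log-CesaroMean} (and Proposition~\ref{P:HeattoInt}) playing the role of the pseudodifferential local Weyl law, the von Neumann mean ergodic theorem providing the bridge between classical ergodicity and a quantum variance bound, and a Koopman--von Neumann extraction combined with a diagonal argument (using separability of $\overline{\A}$) producing the desired density-one subset.

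First I would reduce the problem to a variance bound. Since $|D|e_k = \lambda_k e_k$, conjugation by $e^{it|D|}$ preserves the diagonal matrix entries in the eigenbasis, so for every $T>0$
\[
\langle e_k, a\, e_k\rangle = \langle e_k, \langle a\rangle_T\, e_k\rangle, \qquad \langle a\rangle_T := \frac{1}{2T}\int_{-T}^T \sigma_t(a)\,dt \in B(\Hc),
\]
the integral being understood in the strong operator topology. Writing $b_T := \langle a\rangle_T - \tau(a)\cdot 1$, Cauchy--Schwarz yields
\[
|\langle e_k, a\, e_k\rangle - \tau(a)|^2 \leq \langle e_k, b_T^* b_T\, e_k\rangle,
\]
which upon averaging over $k$ will dominate the quantum variance $V_N(a) := \frac{1}{N+1}\sum_{k=0}^N|\langle e_k, a e_k\rangle - \tau(a)|^2$.

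Next I would invoke ergodicity. Regularity of $(\A,\Hc,D)$ gives the uniform bound $\|\sigma_t(a)-a\|_\infty \leq |t|\cdot\|[|D|,a]\|_\infty$, so $\{\sigma_t\}$ extends to a strongly continuous unitary group on $L_2(S^*\A)$. Classical ergodicity is precisely the statement that the only $\sigma_t$-fixed vectors in $L_2(S^*\A)$ are the scalar multiples of $[1]$; hence the von Neumann mean ergodic theorem gives $[\langle a\rangle_T] \to \tau(a)[1]$ in $L_2(S^*\A)$, so $\tau(b_T^* b_T) = \|[b_T]\|_{L_2(S^*\A)}^2 \to 0$ as $T\to\infty$. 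Combining this with the Ces\`aro identity
\[
\lim_{N\to\infty}\frac{1}{N+1}\sum_{k=0}^N\langle e_k, b_T^* b_T\, e_k\rangle = \tau(b_T^* b_T),
\]
furnished by Proposition~\ref{P:HeattoInt} together with Lemma~\ref{L:AHMSZ}, we obtain $V_N(a)\to 0$ for every $a\in\A$.

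Finally, the Koopman--von Neumann lemma produces, for each $a\in\A$, a density-one $J_a\subseteq\N$ along which $\langle e_j, a\, e_j\rangle \to \tau(a)$. Choosing a countable norm-dense $\{a_n\}\subseteq\A$ (available by separability of $\overline{\A}$) and carrying out the standard diagonal construction of densities yields a single density-one $J$ that works simultaneously for every $a_n$; a three-$\varepsilon$ estimate using $|\langle e_j,(a-a_n)e_j\rangle|\leq\|a-a_n\|$ and norm-continuity of $\tau$ then upgrades the conclusion to all $a\in\A$. The principal obstacle is the Ces\`aro identity for $b_T^* b_T$, since $\langle a\rangle_T$ is not in $\A$ and the local Weyl law is a priori only assumed on $\A$; the cleanest workaround exploits $\sigma_t$-invariance of $\tau$ to unfold $\tau(b_T^* b_T)$ as a double time-integral of traces of products $\sigma_u(a^*)a$, reducing the matter to local Weyl laws for products inside $\A$ (standardly available in the examples of interest, such as the noncommutative torus and almost commutative manifolds) and continuity of the integrand in $u$.
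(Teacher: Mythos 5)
Your argument is correct and is essentially the paper's proof with the citation unpacked: the paper simply observes that classical ergodicity means the $C^*$-dynamical system $(S^*\A,\R,\sigma_t)$ has a unique vacuum state and then invokes \cite[Lemma~2.1]{Zelditch1996} together with Proposition~\ref{P:HeattoInt}, and the proof of that lemma is exactly your chain (time-averaging leaves diagonal matrix elements invariant, Cauchy--Schwarz reduces to a variance bound, von Neumann's mean ergodic theorem plus uniqueness of the invariant vector kills the variance, Koopman--von Neumann extraction and a diagonal argument over a countable dense subset of $\overline{\A}$ finish). The ``principal obstacle'' you flag --- needing the Ces\`aro/local Weyl law for $a^*\sigma_u(a)$ rather than just for elements of $\A$ --- is real but is absorbed into the theorem's hypothesis ``with local Weyl laws'' exactly as it is in the hypotheses of Zelditch's lemma, so it is not a gap relative to the paper's own argument.
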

\begin{proof}
    Classical ergodicity of $(\A, \Hc,D)$ means that the projection onto the $G_t$-invariant vectors in $L_2(S^*\A)$ has rank $1$, which is called `uniqueness of the vacuum state' for the $C^*$-dynamical system $(S^*\A, \mathbb{R}, G_t)$ in~\cite{Zelditch1996}.  Hence, due to Proposition~\ref{P:HeattoInt}, the theorem is a consequence of~\cite[Lemma~2.1]{Zelditch1996}.
\end{proof}

This theorem, while its mathematical core is already an established result in quantum ergodicity, gives a fresh perspective on the criterion of a $C^*$-dynamical system having a `unique vacuum state'. And while the vast majority of results in the paper~\cite{Zelditch1996} are formulated for `quantised abelian' $C^*$-dynamical systems, which in our case would mean $S^*\A$ is represented as a commutative algebra on $L_2(S^*\A)$, the philosophy of noncommutative geometry provides solid reason to study not quantised abelian $C^*$-dynamical systems but ones with a unique vacuum state, as proposed by Zelditch~\cite{Zelditch1996}.

\begin{ex}\label{ex:L2SAspaces2}We continue Example~\ref{ex:L2SAspaces}.
    \begin{enumerate}
        \item \label{ex:commutativel2}The canonical spectral triple corresponding to a compact Riemannian spin manifold, $(C^\infty(M), L_2(S), D_M)$ is classically ergodic if and only if $M$ has ergodic geodesic flow.
        \item \label{ex:AlmostComm2}Any nontrivial almost commutative manifold $( C^\infty(M) \otimes \A_F, L_2(S)\otimes \Hc_F, D_M \otimes 1 + \gamma_M \otimes D_F)$ is not classically ergodic. Note that this corrects~\cite[Corollary~(3.1)]{Zelditch1996}, which was already known to experts to be false.
        \item The noncommutative torus, like the commutative torus, is not classically ergodic.
        \item The spectral triple of the Toeplitz algebra is classically ergodic. See~\cite[Example~(D)]{Zelditch1996} for a generalisation.  
    \end{enumerate}
\end{ex}
\begin{proof} $(1)$: Example~\ref{ex:L2SAspaces}.\ref{ex:commutativel2original} gave that $L_2S^*\A$ is isomorphic to $L_2(S^*M)$ in this setting, with $G_t$ given by the geodesic flow. The definition of classic ergodicity in Definition~\ref{def:ClassicErgodicity} is then equivalent with the standard definition of ergodic geodesic flow, see e.g.~\cite[Proposition~2.4.1]{Petersen1989}.\\
    $(2)$: Since $G_t$ acts on $L_2(S)\otimes HS_F$ by $G_t^M \otimes 1$, any element of the form $1 \otimes a$ is a fixed point of $G_t$.\\
    $(3)$: It follows from Example~\ref{ex:L2SAspaces} that for any $f \in C(\mathbb{S}^{d-1})$, the element $\syml(1\otimes f) \in L_2(S^*\mathbb{T}^d_\theta)$ is a fixed point of $G_t$.\\
    $(4)$: Since the only rotationally invariant functions in $L_2(\mathbb{S}^1)$ are the constant functions, the claim follows.
\end{proof}

We note that the well-studied examples of spectral triples in noncommutative geometry often possess a high degree of symmetry, and in geometric examples a high degree of symmetry can obstruct ergodicity.

\begin{ex}
    A noncommutative example where classical ergodicity has been demonstrated can be found in~\cite[Proposition~3.2]{MaMa2024}. This concerns operators on vector-bundle valued sections of a compact manifold. It is in a sense a more `twisted' version of the almost commutative manifolds in Example~\ref{ex:L2SAspaces2}.\ref{ex:AlmostComm2},  
\end{ex}

\begin{rem}
    In the context of Section~\ref{S:DOS}, it is unknown to the authors what the significance of the construction $S^*\A$ and the concept of ergodicity is. In that section, taking a discrete metric space $(X,d_X)$, the operator $D$ was taken to be $M_w^{-1}$, where $w:X\to\C$ is defined by
    \[
    w(x) := \frac{1}{|B(x_0, d_X(x,x_0))|}, \quad x\in X.
    \]
    It was shown in Proposition~\ref{P:PropCWeyl} that $M_w^{-1}$ satisfies a Weyl law if $X$ satisfies the condition
    \[
    \frac{|B(x_0,r_{k+1})|}{|B(x_0,r_k)|}\to 1, \quad k\to \infty.
    \]
    For the algebra $\A$, it makes sense to include some operators of the form $-\Delta + M_V$, where $\Delta$ is a (bounded) discrete analogue of the Laplace operator and $V:X\to \R$ is bounded. Then, however, $(\A, \Hc, D)$ has no chance to be classically ergodic: $M_w^{-1}$ commutes with the multiplication operators $M_V$, and hence $L_2(S^*\A)$ will contain many fixed points for the action $G_t$. For the weaker property of quantum ergodicity, we would need for the canonical eigenbasis $\{e_x\}_{x\in X}$ of $\ell_2(X)$, that there exists a density one subset $J \subseteq X$ such that
    \[
    \lim_{j \in J}\langle e_{j}, A e_{j} \rangle = \frac{\Tr_{\omega}(A \langle D\rangle^{-d})}{\Tr_{\omega}(\langle D\rangle^{-d})}, \quad A \in \A.
    \] 
\end{rem}

We now conclude this paper by giving some equivalent conditions for classical ergodicity.
First, we invoke von Neumann's mean ergodic theorem~\cite[Theorem~II.11]{ReedSimon1}.
\begin{prop}\label{P:ErgodicThms}
    For any $a \in L_2(S^*\A)$ there exists a fixed point of $G_t$ denoted by $a_{avg} \in L_2(S^*\A)$ such that, putting 
    \[
    a_T := \frac{1}{T}\int_{0}^T G_t(a) \, dt,
    \]
    we have
    \[
    \lim_{T\to \infty} \| a_T- a_{avg}\|_{L_2} \to 0.
    \]
    Furthermore,
    \[
    \langle 1, a_{avg}\rangle_{L_2} = \langle 1, a\rangle_{L_2},
    \]
    and the map $a\mapsto a_{avg}$ is $L_2$-continuous.
\end{prop}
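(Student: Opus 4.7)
The plan is to recognise this statement as a direct application of von Neumann's mean ergodic theorem to the one-parameter unitary group induced by $\sigma_t$ on the Hilbert space $L_2(S^*\A)$.

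First I would verify that $\{\sigma_t\}_{t\in\R}$ extends to a strongly continuous one-parameter group of unitary operators on $L_2(S^*\A)$. Unitarity follows from $\sigma_t$-invariance of the trace $\tau$: since $\sigma_t$ acts by conjugation by $e^{it|D|}$, it commutes with every spectral projection $P_\lambda$ of $|D|$ and therefore preserves the functional defining $\tau$, so $\tau(\sigma_t(A)^*\sigma_t(A)) = \tau(A^*A)$ for $A$ in the $*$-algebra generated by $\bigcup_s \sigma_s(\A)$. Hence $\sigma_t$ descends to an $\|\cdot\|_{L_2}$-isometry on the pre-Hilbert quotient and extends uniquely to a unitary on $L_2(S^*\A)$. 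For strong continuity, uniform boundedness lets me check it on a dense subset: for $A$ in $\bigcup_s\sigma_s(\A)$, the map $t \mapsto \sigma_t(A)$ is norm-continuous in $B(\Hc)$, and norm-continuity descends to $L_2$-continuity via the continuous embedding $(S^*\A, \|\cdot\|_\infty) \hookrightarrow L_2(S^*\A)$ afforded by finiteness of $\tau$.

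Once the unitary group is in place, von Neumann's mean ergodic theorem (see e.g.~\cite{BratteliRobinson1987}) gives directly that the time-averages $[A]_T$ converge in $L_2$-norm to $[A]_{avg} := P[A]$, where $P$ is the orthogonal projection in $L_2(S^*\A)$ onto the closed subspace of $\sigma_t$-invariant vectors. The identity $\langle [1], [A]_{avg}\rangle_{L_2} = \langle [1], [A]\rangle_{L_2}$ is then immediate: since $\sigma_t(1) = 1$ we have $P[1]=[1]$, and self-adjointness of $P$ yields
\[
\langle [1], [A]_{avg}\rangle_{L_2} = \langle [1], P[A]\rangle_{L_2} = \langle P[1], [A]\rangle_{L_2} = \langle [1], [A]\rangle_{L_2}.
\]

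The main technical step is the verification of strong continuity on $L_2(S^*\A)$; everything else is a clean invocation of the abstract mean ergodic theorem on Hilbert space. The passage from operator-norm continuity to $L_2$-norm continuity is where the hypothesis that $\tau$ is a finite positive trace (established just before the definition of $L_2(S^*\A)$ using Weyl's law) plays an essential role.
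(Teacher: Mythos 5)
Your proof is correct and follows essentially the same route as the paper: both reduce the statement to von Neumann's mean ergodic theorem for the unitary group $\sigma_t$ on $L_2(S^*\A)$, with the second identity following from $\sigma_t([1])=[1]$ (you via self-adjointness of the limiting projection $P$, the paper via the $T$-independence of $\langle[1],[A]_T\rangle_{L_2}$ and a limit — an immaterial difference). Your extra care in checking unitarity and strong continuity of $t\mapsto\sigma_t$ on $L_2(S^*\A)$ is a welcome addition that the paper leaves implicit.
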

\begin{proof}
    The existence of $a_{avg}$ and the $L_2$-convergence of $a_T$ to $a_{avg}$ follows from von Neumann's mean ergodic theorem, see e.g.~\cite[Theorem~II.11]{ReedSimon1}, or see~\cite[Corollary~VIII.7.3]{DunfordSchwartzI} for the continuous-time variant we use here. 
    
    Next, since $G_t$ is a unitary operator with $G_t(1) = 1$, we have 
    \[
    \langle 1, G_t(a)\rangle_{L_2} = \langle 1, a \rangle_{L_2}.
    \]
    Hence,
    \begin{align*}
        \big|\langle 1, a_{avg}\rangle_{L_2} - \langle 1, a\rangle_{L_2} \big| &\leq \big|\langle 1, a_{avg}\rangle_{L_2} - \langle1, a_T\rangle_{L_2} \big| + \smash{\underbrace{ \big|\langle1,a_T\rangle_{L_2} - \langle1,a\rangle_{L_2} \big|}_{=0}}\\
        &=|\langle 1, a_{avg} - a_T\rangle_{L_2}|\\
        &\leq \| a_T- a_{avg}\|_{L_2},
    \end{align*}
    which converges to $0$ as $T \to \infty$ due to the first part. 
    
    The element $a_{avg}$ being a fixed point of $G_t$ is a consequence of the estimate
    \begin{align*}
     \|a_T - G_t(a_T)\|_{L_2} &= \frac{1}{T} \bigg \| \int_0^tG_s(a)\, ds - \int_T^{T+t} G_s(a)\,ds\bigg\|_{L_2}\\
     &\leq \frac{2t}{T}\|a\|_{L_2}\xrightarrow{T \to \infty} 0.    
    \end{align*}
    Finally, for the continuity of $a \mapsto a_{avg}$, note that
    \[
    \|a_T\|_{L_2} \leq \frac{1}{T} \int_0^T \|G_t( a) \|_{L_2}\,dt =  \| a\|_{L_2},
    \]
    and taking the limit $T \to \infty$,
    \begin{align*}
        \|a_{avg} \|_{L_2}\leq \|a\|_{L_2}.
    \end{align*}
\end{proof}

\begin{rem}
    Since $S^*\A$ is represented as bounded operators on $L_2(S^*\A)$, we can consider the von Neumann algebra $\pi_\tau(S^*\A)''$ in $B(L_2(S^*\A))$, denoted as $L_\infty(S^*\A)$, to which $\tau$ extends as a faithful normal tracial state. 
    We can define the noncommutative $L_p$ spaces $L_p(S^*\A):= L_p(\tau)$ for $1 \leq p \leq \infty$ via standard constructions (we recover $L_2(S^*\A)$ for $p=2$). This is precisely how the spaces $L_p(\mathbb{T}^d_\theta)$ are constructed~\cite[Section~3.5]{LMSZVol2}. It is possible to show that $G_t: L_p(S^*\A) \to L_p(S^*\A)$ are isometries for all $1 \leq p \leq \infty$, and the averages in Proposition~\ref{P:ErgodicThms} exist and converge in every $L_p(S^*\A)$. 
\end{rem}

\begin{prop}\label{P:ClassicErgod}
    Given a unital regular spectral triple  $(\A, \Hc, D)$ satisfying Weyl's law, the following are equivalent:
    \begin{enumerate}
        \item the spectral triple is classically ergodic; \label{item:test}
        \item for all $a \in L_2(S^*\A)$,
    \[
     a_{avg} = \langle 1, a\rangle_{L_2} \cdot 1;
    \]
    \item writing 
    \[
    A_T := \frac{1}{T}\int_{0}^T \sigma_t(A) \, dt, \quad A \in \big \langle \bigcup_{t \in \R} \sigma_t(\mathcal{A}) \big \rangle,
    \]
    where $\big \langle \bigcup_{t \in \R} \sigma_t(\mathcal{A}) \big \rangle$ is the $*$-algebra generated by $\bigcup_{t \in \R} \sigma_t(\mathcal{A})$, we have
    for all $A \in \big \langle \bigcup_{t \in \R} \sigma_t(\mathcal{A}) \big \rangle$
    \[
    \lim_{T \to \infty} \omega \circ M \Big( \langle e_k, |A_T|^2 e_k \rangle \Big) = \bigg( \frac{ \Tr_{\omega} (A \langle D\rangle^{-d})}{\Tr_\omega(\langle D\rangle^{-d})}  \bigg)^2.
    \]
    \end{enumerate} 
\end{prop}
\begin{proof}
    $(1) \Leftrightarrow (2)$ is easily seen from the fact that $a_{avg}$ is a fixed point of $G_t$.\\
    Next, if $A \in \big \langle\bigcup_{t \in \R} \sigma_t(\mathcal{A})\big \rangle$, then by Theorem~\ref{T:Log-CesaroMean} it follows that 
    \begin{align*}
    \omega \circ M \Big( \langle e_k, |A_T|^2 e_k \rangle \Big) &= \frac{ \Tr_\omega(|A_T|^2\langle D\rangle^{-d})}{\Tr_\omega(\langle D\rangle^{-d})}\\
    &= \langle \syml(A_T), \syml(A_T)\rangle_{L_2}.
    \end{align*}
    Since $\syml(A_T) = \syml(A)_T$, Proposition~\ref{P:ErgodicThms} gives that
    \begin{equation}\label{eq:SunadaStep}
    \lim_{T \to \infty} \omega \circ M \Big( \langle e_k, |A_T|^2 e_k \rangle \Big) =  \langle \syml(A)_{avg}, \syml(A)_{avg}\rangle_{L_2}.
    \end{equation}
    $(2) \Rightarrow (3)$: This now follows from Equation~\eqref{eq:SunadaStep} and the identity $\langle 1, \syml(A) \rangle_{L_2} = \frac{\Tr_{\omega}(A\langle D\rangle^{-d})}{\Tr_\omega(\langle D\rangle^{-d})}$.\\
    $(3) \Rightarrow (2)$: For $a \in L_2(S^*\A)$, 
    Proposition~\ref{P:ErgodicThms} gives that $\langle a_{avg},1\rangle_{L_2} = \langle a, 1 \rangle_{L_2}$, and hence
    \begin{align*}
         \|a_{avg} - \langle 1,a\rangle \cdot 1\|_{L_2}^2
        &=  \langle a_{avg}, a_{avg}\rangle_{L_2} - \langle a_{avg}, 1\rangle_{L_2} \overline{\langle 1,a\rangle_{L_2}} - \langle 1, a_{avg}\rangle _{L_2} \langle 1,a\rangle _{L_2} + |\langle 1,a\rangle_{L_2}|^2 \\
        &= \langle a_{avg}, a_{avg}\rangle_{L_2} - |\langle 1,a\rangle_{L_2}|^2.
    \end{align*}
    Therefore, assumption (3) combined with Equation~\eqref{eq:SunadaStep} gives for all $A \in \big \langle \bigcup_{t \in \R} \sigma_t(\mathcal{A})\big\rangle$,
    \[
    \syml(A)_{avg} = \langle 1, \syml(A)\rangle_{L_2} \cdot 1.
    \]
    The image of $\big \langle \bigcup_{t \in \R} \sigma_t(\mathcal{A})\big\rangle$ under the map $\syml$ being dense in $L_2(S^*\A)$, and the map $a \to a_{avg}$ being $L_2$-continuous, we can conclude that
    \[
    a_{avg} = \langle 1, a\rangle_{L_2} \cdot 1
    \]
    for all $a \in L_2(S^*\A)$. 
\end{proof}

\printbibliography

\end{document}